\theoremstyle{plain}
\newcommand{\E}{\mathbb E}
\newcommand{\R}{\mathbb R}
\newcommand{\ep}{\epsilon}
\def\P{{\mathbb P}}
\newenvironment{remark}[1][Remark]{\begin{trivlist}
\item[\hskip \labelsep {\bf Remark}]}{\end{trivlist}}
\newtheorem{theorem}{Theorem}[section]
\newtheorem{lemma}[theorem]{Lemma}
\newtheorem{corollary}[theorem]{Corollary}
\newtheorem{proposition}[theorem]{Proposition}
\newtheorem{definition}[theorem]{Definition}
\theoremstyle{definition}
\newenvironment{romenumerate}[1][0pt]{
\addtolength{\leftmargini}{#1}\begin{enumerate}
 }{\end{enumerate}}
\newcommand\ntoo{\ensuremath{{n\to\infty}}}
\newcommand\set[1]{\ensuremath{\{#1\}}}
\newcommand\bigset[1]{\ensuremath{\bigl\{#1\bigr\}}}
\newcommand\bigpar[1]{\bigl(#1\bigr)}
\newcommand\Bigpar[1]{\Bigl(#1\Bigr)}
\newcommand\taux{\tau'}
\newcommand\zst{Z_{s,t}}
\newcommand\eps{\epsilon}
\newcommand\etta{\boldsymbol1}
\newcommand\bbt{\bar b^t}
\newcommand\ggtx{\gamma_{t,x}}
\newcommand\ggtxn{\gamma_{t,x,n}}
\newcommand\downto{\searrow}
\newcommand\upto{\nearrow}
\title{The inverse first-passage problem and optimal stopping}
\author[Erik Ekstr\"om and Svante Janson]{Erik Ekstr\"om$^{1}$ and Svante Janson$^2$}
\subjclass[2000]{Primary 60J65; Secondary 60G40}
\keywords{Inverse first-passage problem; optimal stopping; non-linear integral equation}
\address{Uppsala University, Box 480, SE-75106 Uppsala, Sweden.}
\date{31 August, 2015}
\thanks{$^1$ Partly supported by the Swedish Research Council.}
\thanks{$^2$ Partly supported by the Knut and Alice Wallenberg Foundation.}
\begin{document}

\begin{abstract} 
Given a survival distribution on the positive half-axis and a Brownian motion, a solution of the inverse first-passage 
problem consists of
a boundary so that the first passage time over the boundary has the given distribution. We show that
the solution of the inverse first-passage problem coincides with the solution of a related optimal stopping problem.
Consequently, methods from optimal stopping theory may be applied in the study of the inverse first-passage problem.
We illustrate this with a study of the associated integral equation for the boundary.
\end{abstract}

\maketitle

\section{Introduction}

In the inverse first-passage problem one asks for a boundary so that the first passage
time of a Brownian motion over this boundary has a given distribution. 
Anulova \cite{A}
provided the existence of a barrier solution, i.e.
a lower semi-continuous boundary solving the inverse first-passage problem.
The key step in \cite{A} is the approximation of the given distribution 
with discrete ones. 
While the solution for a discrete distribution is straightforward to produce in theory, 
it is less obvious how to determine any qualitative properties of the solution.
Moreover, the convergence of the discrete approximation 
is only guaranteed along a subsequence, and the procedure gives little 
insight about what properties of the discrete solution are preserved in the limit.

A key step towards a better understanding of the inverse first-passage problem was taken 
by Avellaneda and Zhu \cite{AZ}, who argued that the pair consisting of the boundary and 
the distribution of the Brownian paths that did not yet hit the boundary
satisfies a parabolic free-boundary problem. Cheng, Chen, Chadam and Saunders \cite{CCCS1} proved the existence and uniqueness
of solutions (in the viscosity sense) to the corresponding variational inequality, and 
under the assumption that the given survival distribution function is continuous, the same authors showed in 
\cite{CCCS2} that
the solution of the variational inequality indeed provides a solution of the inverse first-passage problem.

Interestingly, the variational inequality suggested in \cite{AZ}, 
and treated mathematically in \cite{CCCS1} and \cite{CCCS2}, is identical to the variational inequality 
associated with a certain optimal stopping problem, thus suggesting a connection between 
inverse first-passage problems and optimal stopping theory.
A main contribution of the current article is to establish this connection rigorously. 
One possible method to accomplish this would be to show that the value function
in the associated optimal stopping problem is a viscosity solution of the corresponding variational 
inequality, and then to appeal to the uniqueness of solutions and the results of \cite{CCCS2}. 
The method presented in the current paper is more direct since we first provide the connection 
in a discrete version of the problem, and then use simple convergence arguments to show that
the connection is preserved in the limit. In this way, we circumvent the use of variational inequalities
and viscosity solutions thereof. Moreover, we are able to prove the connection under minimal regularity 
assumptions on the given distribution, not assuming that it is continuous. We also show uniqueness of the barrier in this generality, 
thus extending the uniqueness results of \cite{CCCS2}.

Apart from the theoretical interest in two seemingly unrelated problems with the same solution, the connection
between the inverse first-passage problem and the related optimal stopping problem enables the use of 
powerful methods from optimal stopping theory also in the study of inverse first-passage problems.
As an illustration of this, we use a mix of techniques 
from first-passage time problems and from optimal stopping theory to study a related integral equation for the
boundary. More precisely, to see that the boundary satisfies an integral equation, we adapt 
arguments from first-passage time problems, and to prove uniqueness of solutions we follow an approach 
from optimal stopping theory.

While all results are formulated for the case of a standard Brownian motion, we do not anticipate any problems
in extending our results to more general diffusion processes as in \cite{CCCS2}. However, for simplicity of the presentation
we refrain from doing this.

\section{Assumptions and  main results}

\begin{definition}
A set $B\subseteq[0,\infty)\times\R$ is a {\em barrier} if
\begin{itemize}
\item[(i)]
$B$ is closed,
\item[(ii)]
$(t,x)\in B$ implies that $(t,y)\in B$ for all $y\geq x$.
\end{itemize}
\end{definition}

If $B$ is a barrier, then one naturally associates with it a lower semi-continuous boundary 
function $b:[0,\infty)\to[-\infty,\infty]$ defined by
\begin{equation}
\label{b}
b(t):=\inf\{x\in\R:(t,x)\in B\}
\end{equation}
(throughout this article we use the convention $\inf\emptyset=\infty$).
Conversely, for a given lower semi-continuous boundary $b:[0,\infty)\to[-\infty,\infty]$
there is an associated barrier defined by
\[B=\{(t,x)\in[0,\infty)\times\R:x\geq b(t)\}.\]
In this way there is a one-to-one correspondence between barriers and lower semi-continuous functions
taking values in the extended real line.

\begin{definition}
\label{survivalfunction}
A function $g:[0,\infty)\to[0,1]$ is a {\em survival distribution function} if
$g$ is non-increasing, right-continuous and satisfies $g(0)=1$ and
  $g(\infty)\geq 0$. 
In other words, 
$1-g$ is a (possibly defective) distribution function.
\end{definition}

Given a standard Brownian motion $W$ with $W_0=0$ and a barrier $B$, define 
the first hitting time $\tau_B$ of $B$ as
\begin{align}
\label{tauB}
\tau_B &:= \inf\{t>0:(t,W_t)\in B\}\\
\notag
&\phantom:= \inf\{t>0: W_t\geq b(t)\}.  
\end{align}
Since $B$ is closed, $\tau_B$ is a stopping time, and
$(\tau_B,W_{\tau_B})\in B$. 
Moreover, 
if $\tau_B>0$ a.s.\ for a given barrier $B$, then the function
\begin{equation}\label{g}
g(t):=\P(\tau_B>t)
\end{equation}
is a survival distribution. 
By the 0-1-law, we have either $\tau_B=0$ a.s.\ or $\tau_B>0$ a.s.
In the {\em first-passage problem}, one aims at determining the survival distribution function $h$ for a given barrier $B$. 
Conversely, in the {\em inverse first-passage problem}, a survival distribution $g$ is given, and one instead seeks a barrier $B$.

We next introduce the associated optimal stopping problem. 
Given a survival distribution function $g$ (as in Definition~\ref{survivalfunction}), let 
the function $v:[0,\infty)\times\R\to[0,1]$ be defined by
\begin{equation}
\label{v}
v(t,x):=\inf_{\gamma\in\mathcal T[0,t]}\E\left[ g(t-\gamma)\etta_{\{\gamma<t\}}+ \etta_{\{\gamma=t, x+W_t\geq 0\}}\right],
\end{equation}
where $\mathcal T[0,t]$ denotes the set of stopping times of the Brownian motion $W$ taking values in $[0,t]$.

\begin{remark}
Note that we start at time 0 and face an optimal stopping problem with horizon $t\geq 0$.
Since the pay-off function $g(t-\cdot)$ is non-decreasing, the only reason not to stop immediately is
the possibility that $x+W$ ends up below 0 at time t.
\end{remark}

Clearly, the value function $v$ is non-decreasing in $x$. Moreover, the non-negativity of the pay-off and the possibility 
to choose $\gamma=t$ yield 
\[0\leq v(t,x)\leq \P\left(x+W_t\geq 0\right).\]
Similarly, the possibility to choose $\gamma=0$ and the monotonicity of $g$ yield 
\begin{eqnarray*}
g(t)\geq v(t,x) &\geq& \inf_{\gamma\in\mathcal T[0,t]}\E\left[g(t)\etta_{\{\gamma<t\}}+ 
g(t)\etta_{\{\gamma=t, x+W_t\geq 0\}}\right]\\
&=& g(t)\P\left( x+W_t\geq 0\right).
\end{eqnarray*}
Consequently, the value function $v$ satisfies 
\[\lim_{x\to-\infty}v(t,x)=0\mbox{ and }\lim_{x\to\infty}v(t,x)=g(t)\] 
for each fixed $t$.

We now present our main results. The first one states that the value function $v$ of the optimal 
stopping problem provides a solution to the inverse first-passage problem.

\begin{theorem}
\label{main}
{\bf (Solution of the inverse first-passage problem.)}
Let a survival distribution function $g$ be given,
and let $v$ be the value of the optimal stopping problem
defined in \eqref{v}. Then 
\begin{equation}
\label{Bfromv}
B:=\{(t,x)\in[0,\infty)\times\R:v(t,x)=g(t)\}
\end{equation}
is a barrier, and $P(\tau_B>t)=g(t)$. 
\end{theorem}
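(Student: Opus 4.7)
The plan is to follow the roadmap sketched in the introduction: establish the theorem first for survival functions $g$ that are piecewise constant with finitely many jumps, and then pass to the limit, thereby bypassing the viscosity machinery of \cite{CCCS1,CCCS2}. Two items must be handled: the barrier property of $B$, and the first-passage identity $\mathbb{P}(\tau_B>t)=g(t)$.

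That $B$ satisfies the monotonicity defining a barrier is immediate from $v(t,\cdot)$ being non-decreasing together with $v\leq g(t)$. For closedness I would prove that $v$ is upper semi-continuous in $(t,x)$ by writing it as an infimum of integrands which, thanks to right-continuity and monotonicity of $g$ and joint continuity of the Brownian flow in the starting point, are individually upper semi-continuous in $(t,x)$ for each fixed $\gamma$; combined with upper semi-continuity of $g$ (itself a consequence of right-continuity and monotonicity), $\{v\geq g\}=B$ is a super-level set of an upper semi-continuous function, hence closed.

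Next, I would dispatch the discrete case. Assume $g$ has jumps only at $0<t_1<\cdots<t_N$ and is constant between them. Construct a candidate barrier $B_0$ recursively by placing at each $t_i$ a horizontal segment at height $\beta_i$ chosen so that a Brownian motion conditioned on having avoided $B_0$ on $[0,t_i)$ exceeds $\beta_i$ at time $t_i$ with conditional probability $(g(t_i^-)-g(t_i))/g(t_i^-)$; by construction $\mathbb{P}(\tau_{B_0}>t)=g(t)$. Then, by backward induction on the jump times, I would verify that the value function $v$ for this $g$ coincides with $g(t)$ precisely on $B_0$: between jump times the payoff $g(t-\cdot)$ is locally constant, so $v$ solves the heat equation with Dirichlet data on the portion of $B_0$ built so far; at each jump time a direct comparison shows that stopping is optimal at heights $\geq\beta_i$ and strictly suboptimal below. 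This identifies $B$ with $B_0$ in the discrete setting and yields the first-passage identity by construction.

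For a general $g$, approximate monotonically by step functions $g_n\to g$ (for instance by dyadic quantization of the range), let $v_n$, $B_n$, $b_n$ denote the corresponding objects from the discrete case, and recall $\mathbb{P}(\tau_{B_n}>t)=g_n(t)$. Monotonicity of the optimal stopping problem in the payoff gives $v_n\to v$ pointwise, convergence $b_n\to b$ of the boundary functions follows in an appropriate one-sided sense, and one then passes to the limit in the first-passage identity, using right-continuity of $g$ to settle endpoint issues. The chief obstacle will be the convergence $\tau_{B_n}\to\tau_B$ of hitting times, delicate precisely at times where $g$ has a jump: there the barriers concentrate on horizontal segments and one must rule out that Brownian motion approaches the limit boundary arbitrarily closely from below without being absorbed. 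Handling the atoms of $g$ carefully, via lower semi-continuity of $b$ and the strong Markov property, is where the main technical work lies and is the delicate point that the authors emphasize they treat without invoking viscosity solutions.
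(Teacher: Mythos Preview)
Your high-level strategy---discrete case first, then approximation---matches the paper's. But the execution of the limiting step differs in an essential way, and your version has a genuine gap precisely where you yourself flag it.

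The paper does not attempt to prove $b_n\to b$ or $\tau_{B_n}\to\tau_B$ directly from $v_n\to v$. Instead it imports Anulova's theorem \cite{A} as a black box: given the discrete barriers $B^n$, Anulova supplies a subsequence and a standard barrier $B$ with $\tau_{B^{n_m}}\to\tau_B$ a.s.\ and $\P(\tau_B>t)=g(t)$. The paper's own contribution is then to \emph{identify} this $B$ with $\{v=g\}$. This is done via the auxiliary function $u(t,x):=\P(W_t\le x,\tau_B>t)$: the paper shows $u^n=v^n$ in the discrete case by matching a forward inductive scheme for $u^n$ with a backward (dynamic programming) scheme for $v^n$, passes to the limit to get $u=v$, and then invokes an elementary lemma (Proposition~\ref{recoveryfromu}) saying that any standard barrier equals $\{(t,x):u(t,x)=g(t)\}$. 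In particular, the barrier property of $\{v=g\}$---including closedness---is obtained indirectly, since Anulova's $B$ is already a barrier.

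Your plan instead tries to establish $\tau_{B_n}\to\tau_B$ from scratch, which is essentially the content of Anulova's theorem; you correctly identify it as the chief obstacle but do not resolve it. The tools you list (lower semi-continuity of $b$, strong Markov) do not by themselves control the hitting times of a merely one-sided limit of boundaries, and you have not yet shown that the boundary extracted from $v$ coincides with any barrier whose first-passage distribution is under control. A second, smaller gap: your upper semi-continuity argument for $v$ is not as immediate as stated, because the infimum in \eqref{v} is taken over the $t$-dependent class $\mathcal T[0,t]$, so $v$ is not a pointwise infimum of a fixed family of u.s.c.\ functions of $(t,x)$. The paper sidesteps this issue entirely via the identification with Anulova's barrier.
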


Furthermore, the solution in Theorem~\ref{main} is unique up to some trivial modifications that
we eliminate by the next definition.

\begin{definition}
We say that a barrier $B$ is {\em standard} if the corresponding boundary $b$ satisfies 
\begin{itemize}
\item[(i)]
$b(0)=0$ 
\item[(ii)]
$b(t)=-\infty \mbox{ for some }t>0\implies b(s)=-\infty$ for all $s>t$.
\end{itemize}
\end{definition}

\begin{remark}
If $B$ is a barrier with $\tau_B>0$ a.s., then there exists a standard barrier $\overline B$ such that
$\tau_B=\tau_{\overline B}$. The barrier $\overline B$ can be defined via its boundary function $\overline b$ by
\begin{equation}\label{barb}
\overline b(t)=
\begin{cases}
0, & t=0\\
b(t), &  0<t<T_B\\
-\infty, & t\geq T_B,  
\end{cases}  
\end{equation}
where $T_B:=\inf \{t>0:b(t)=-\infty\}$. Indeed, first note that the value of $b(0)$ does not influence the
corresponding hitting time (since the infimum in \eqref{tauB} is taken over strictly positive times).
Moreover, since $\tau_B>0$ we must have $\liminf_{t\downto 0}b(t)\geq 0$, 
so defining $\overline b(0)=0$ does not destroy the lower semi-continuity of the boundary. Similarly, 
defining $\overline b(s)=-\infty$ for $s>T_B$ influences neither the corresponding 
hitting time nor the lower semi-continuity of the boundary.
\end{remark}

\begin{theorem}
\label{main2}
{\bf (Uniqueness.)}
Let a survival distribution $g$ be given. Then the barrier $B$ defined in \eqref{Bfromv} is standard,
and it is the unique standard barrier with $\P(\tau_B>t)=g(t)$.
\end{theorem}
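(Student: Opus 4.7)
Property (i) is immediate: since $\mathcal{T}[0,0] = \{0\}$, we have $v(0,x) = \etta_{\{x \geq 0\}}$, which combined with $g(0) = 1$ gives $b(0) = 0$. For property (ii), if $b(t) = -\infty$ then $v(t,x) = g(t)$ for all $x \in \R$; sending $x \to -\infty$ in the inequality $v(t,x) \leq \P(x + W_t \geq 0)$ established in the text forces $g(t) = 0$. Monotonicity of $g$ then gives $g(s) = 0$ for all $s \geq t$, and the sandwich $0 \leq v(s,x) \leq g(s) = 0$ propagates $b(s) = -\infty$ to all such $s$.

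\textbf{Part 2: Uniqueness.} Let $B'$ be any standard barrier with $\P(\tau_{B'} > t) = g(t)$. The plan is to show $B = B'$ by exploiting the optimal-stopping characterization $B = \{v = g\}$ furnished by Theorem~\ref{main}. The key tool is to feed the first-passage time of $B'$ back into \eqref{v} as an admissible stopping rule: for each $(t,x)$, the stopping time $\gamma' := t \wedge \inf\{s > 0 : x + W_s \geq b'(s)\}$ lies in $\mathcal{T}[0,t]$ and yields
\begin{equation*}
v(t,x) \leq \E\bigl[g(t - \gamma')\etta_{\{\gamma' < t\}} + \etta_{\{\gamma' = t,\, x + W_t \geq 0\}}\bigr].
\end{equation*}
Using the strong Markov property at $\gamma'$ together with the identity $g(r) = \P^0(\tau_{B'} > r)$, the goal is to rewrite this upper bound in such a way that it equals $g(t)$ exactly on $B'$ and is strictly smaller elsewhere; combined with the reverse inequality extracted in the same way from $B$, this would force $\{v = g\} = B'$, and hence $B = B'$.

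\textbf{Main obstacle.} The main difficulty is precisely the strong-Markov rewrite: after $x + W$ first hits $B'$ at time $\gamma'$, it sits at the boundary level $b'(\gamma') \neq 0$, so the identity $g(r) = \P^0(\tau_{B'} > r)$ does not apply directly to the post-$\gamma'$ trajectory. In line with the discretization philosophy emphasized in the introduction, the cleanest resolution is probably to first establish uniqueness for discrete (piecewise-constant) survival distributions $g_n$, where the corresponding barrier takes only finitely many values and is pinned down inductively on each constancy interval of $g_n$ by a single algebraic equation, and then to pass to the limit $g_n \to g$ using the convergence machinery already developed for the existence part of Theorem~\ref{main}. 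A secondary issue is to argue that any convergent subsequence of the discrete barriers limits to a standard barrier, which requires checking that the standardness conditions $b(0) = 0$ and the $-\infty$-propagation property are preserved under the type of convergence produced by the discretization.
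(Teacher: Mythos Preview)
Your Part 1 is correct and in fact fills in details the paper omits with the phrase ``it is easy to see''.

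Your Part 2, however, is not a proof: you openly identify a ``main obstacle'' that you do not resolve, and the discretization workaround you suggest is left as a programme rather than carried out. There are also two concrete issues with the plan as stated. First, the stopping time $\gamma' = t \wedge \inf\{s>0 : x+W_s \ge b'(s)\}$ uses the \emph{forward} boundary $b'(s)$, whereas the structure of the payoff $g(t-\gamma)$ in \eqref{v} calls for the \emph{reflected} boundary $\bar b'^{\,t}(s)=b'(t-s)$ (compare Theorem~\ref{Tstopp}); with the forward boundary the strong-Markov restart after $\gamma'$ has no useful relation to $g(r)=\P^0(\tau_{B'}>r)$. Second, even with the right stopping time, showing that the resulting upper bound is \emph{strictly} below $g(t)$ off $B'$ is essentially the content of Proposition~\ref{recoveryfromu}, which you do not invoke.

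The paper's route is much shorter and avoids all of this. The point is that $v$ in \eqref{v} depends only on $g$, not on any barrier. Theorem~\ref{main3} (proved independently for \emph{every} barrier, via discretization in Section~\ref{S:uniqueness}) says that for any barrier $B'$ with survival function $g$, the function $u'(t,x)=\P(W_t\le x,\,\tau_{B'}>t)$ equals $v(t,x)$. Hence any two standard barriers with the same $g$ have the same $u$, and Proposition~\ref{recoveryfromu} recovers a standard barrier uniquely from its $u$. That is the entire uniqueness argument: two lines once Theorem~\ref{main3} and Proposition~\ref{recoveryfromu} are in hand. Your discretization instinct is correct---that is exactly how Theorem~\ref{main3} is established---but the discretization work belongs in the proof of $u=v$, not in a direct attack on uniqueness.
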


Given a barrier $B$, define
\begin{equation}\label{u}
u(t,x):=\P(W_t\leq x, \tau_B>t),
\end{equation}
The proofs of Theorems~\ref{main} and \ref{main2} use the following
result of independent interest. 

\begin{theorem}
\label{main3}
{\bf ($u=v$)}
Let $B$ be a barrier and $g$ the corresponding survival distribution function
$g(t)=\P(\tau_B>t)$. Then the function $u$ defined in \eqref{u} 
and the value 
$v$ of the optimal stopping problem defined in \eqref{v} satisfy $u=v$ on
$[0,\infty)\times\R$. 
\end{theorem}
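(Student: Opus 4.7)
The plan is to establish the identity $u = v$ by passing through a discrete approximation, in accordance with the strategy outlined in the introduction. The first step is to approximate the given barrier $B$ by a sequence of ``simple'' barriers $B_n$ chosen so that the corresponding first-passage distributions $g_n(t) = \P(\tau_{B_n} > t)$ admit a finite-dimensional description. A natural choice is to take $b_n$ to be piecewise constant on dyadic intervals $[k/2^n,(k+1)/2^n)$, with $b_n \to b$ in a suitable sense. For each such $B_n$, both $u_n$ and $v_n$ can be computed by finite recursions: $u_n(t,x) = \P(W_t \le x, \tau_{B_n} > t)$ admits an expression via the Markov property of $W$ at the grid times as an iterated Gaussian integral over the non-crossing region, while the value function $v_n$ of the corresponding optimal stopping problem is given by the dynamic programming recursion over the grid (the minimum, at each grid point, of the immediate stopping payoff $g_n(t)$ and the conditional expectation of $v_n$ one step later). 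Matching the two recursions, an induction on the grid index yields the discrete identity $u_n(t,x) = v_n(t,x)$ at every grid point.

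As $n \to \infty$ one has $g_n \to g$ pointwise and $b_n \to b$, so that $\tau_{B_n}$ converges in distribution to $\tau_B$ jointly with $W_t$; this gives $u_n \to u$ pointwise. For $v_n \to v$, a stability estimate for optimal stopping values under perturbations of the reward function $g_n$ does the job, since the reward in \eqref{v} depends continuously on $g_n$ (and is uniformly bounded). Combining these convergences with the discrete identity yields $u = v$ on $[0,\infty) \times \R$.

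The main difficulty I expect is to establish the discrete identity in sufficient generality: the boundary $b_n$ may equal $-\infty$ on part of the grid and the survival function $g_n$ may have atoms (and the map from barriers to survival functions is not injective off of the class of standard barriers), so the backward-induction argument must accommodate these possibilities carefully. A secondary difficulty is arranging the approximation so that both $u_n \to u$ and $v_n \to v$ hold and the identity survives the limit; the lower semi-continuity of the barrier and the right-continuity of $g$ must be respected so that discontinuities of $g$ do not cause inconsistencies in the limit.
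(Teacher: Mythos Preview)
Your overall plan---prove $u_n = v_n$ for discrete approximations and pass to the limit---is the paper's strategy. However, your specific choice of approximation creates a genuine gap in the ``discrete identity'' step.

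If $b_n$ is piecewise constant on dyadic intervals with finite values, then $\tau_{B_n}$ can occur at any time within an interval, so $g_n(t) = \P(\tau_{B_n} > t)$ is \emph{not} piecewise constant. Consequently the optimal stopping problem for $g_n$ does not reduce to a discrete-time problem, and the dynamic-programming recursion you describe---the minimum, at each grid point, of $g_n(t)$ and the one-step conditional expectation---does not compute $v_n$. The ``matching of recursions'' therefore does not go through as stated; the paper's Theorems~\ref{forward} and~\ref{backward} both rely on $g$ being piecewise constant, not $b$.

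The paper sidesteps this by a different discretisation: it keeps $b_n(t) = b(t)$ only at a finite set $A_n(b)$ of times and sets $b_n(t) = \infty$ elsewhere. Then $\tau_{B_n}$ is supported on $A_n(b)$, $g_n$ is genuinely piecewise constant, and both $u_n$ and $v_n$ satisfy the same recursion (Corollary~\ref{Cu=v}). The set $A_n(b)$ is built from (i) the minimisers of $b$ on dyadic intervals, which---via the auxiliary Lemma~\ref{LP2} showing $\tau_B = \inf\{t>0:W_t>b(t)\}$ a.s.---yields $\tau_{B_n} \to \tau_B$ a.s.\ (Lemma~\ref{LP4}), and crucially (ii) the first $n$ atoms of $\tau_B$, which guarantees $g_n(t) \to g(t)$ for \emph{every} $t$, including jump points of $g$ (Lemma~\ref{Lg}). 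Without (ii) one only gets convergence at continuity points, and then the stability argument for $v_n \to v$ (which in the paper uses pointwise convergence of $g_n$ together with dominated convergence for each fixed stopping time) can fail at jumps of $g$---exactly the secondary difficulty you anticipate but do not resolve.
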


Another relation between the optimal stopping problem in \eqref{v} and
the (inverse) first-passage problem is that the infimum in \eqref{v} is
attained by a stopping time that is the hitting time of a barrier obtained
by reflecting $B$. To be more precise, fix $t>0$, let $\bbt$ be the
reflected boundary function
\begin{equation}
  \label{bbt}
\bbt(s):=
\begin{cases}
  b(t-s), & 0\le s<t
\\
-\infty, &s\ge t
\end{cases}
\end{equation}
and let 
\begin{equation}\label{ggtx}
  \ggtx:=\inf\bigset{s\ge0:x+W_s\ge \bbt(s)}.  
\end{equation}
In other words, 
if we regard $u\mapsto x+W_{t-u}$ as a backward Brownian motion with
``time'' $u$ decreasing from $t$ to 0, starting at $(t,x)$, then $t-\ggtx$
is the time this backward Brownian motion hits $B\cup\set{t=0}$.
Note that $\ggtx$ is a stopping time with $0\le\ggtx\le t$, i.e.,
$\ggtx\in \mathcal T[0,t]$. 

\begin{remark}
Note that we allow $s=0$ in \eqref{ggtx}, unlike in \eqref{tauB}. Thus
$\ggtx=0$ when $x\ge\bbt(0)=b(t)$. Conversely, $\ggtx>0$ when $x<b(t)$  
because $b$ is lower semicontinuous.
\end{remark}

\begin{theorem}
  \label{Tstopp}
The stopping time $\ggtx$ is optimal for the optimal stopping problem
\eqref{v}, i.e., 
\begin{equation}
\label{vopt}
v(t,x)=\E\left[ g(t-\ggtx)\etta_{\{\ggtx<t\}}+ \etta_{\{\ggtx=t, x+W_t\geq 0\}}\right].
\end{equation}
\end{theorem}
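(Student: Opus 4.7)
The strategy is to reduce \eqref{vopt} to an optional-stopping identity for the value function, and then verify that identity via the dynamic programming principle.

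By Theorem~\ref{main3}, $v(t,x)=u(t,x)=\P(W_t\le x,\tau_B>t)$, and by \eqref{Bfromv} the stopping region $\{(t',y):v(t',y)=g(t')\}$ of the optimal-stopping problem coincides with the barrier $B$. Since $\ggtx\in\mathcal T[0,t]$, the inequality ``$\le$'' in \eqref{vopt} is immediate from the fact that $v$ is an infimum; only the reverse inequality requires work. The key observation for the reverse direction is that $\ggtx$ is precisely the first time the trajectory $s\mapsto(t-s,x+W_s)$ enters $B$: for $s\in[0,t)$, $(t-s,x+W_s)\in B$ iff $x+W_s\ge b(t-s)=\bbt(s)$, which matches \eqref{ggtx}, while the convention $\bbt(t)=-\infty$ forces $\ggtx\le t$. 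Combined with the definitional boundary value $v(0,y)=\etta_{\{y\ge 0\}}$ (obtained from \eqref{v} at $t=0$, since $\mathcal T[0,0]=\{0\}$) and the identity $v=g$ on $B$, this gives
\[
v(t-\ggtx,x+W_\ggtx)=g(t-\ggtx)\etta_{\{\ggtx<t\}}+\etta_{\{\ggtx=t,\,x+W_t\ge 0\}}.
\]
So \eqref{vopt} reduces to proving the optional-stopping identity $v(t,x)=\E[v(t-\ggtx,x+W_\ggtx)]$.

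To establish this identity, I would show that $M_s:=v(t-s,x+W_s)$ is a bounded submartingale on $[0,t]$ that is a true martingale when stopped at $\ggtx$. The submartingale property follows from the DPP for $v$, itself a routine consequence of the strong Markov property of $W$ applied to \eqref{v}: the admissible strategy ``continue until $\sigma$, then behave optimally'' yields $v(t,x)\le\E[v(t-\sigma,x+W_\sigma)]$ for every $\sigma\in\mathcal T[0,t]$. For $s<\ggtx$ the state lies strictly in the continuation region $B^c=\{v<g\}$, where no stopping rule strictly beats continuation, so the DPP degenerates to equality, making $(M_{s\wedge\ggtx})_{s\in[0,t]}$ a genuine bounded martingale. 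Optional sampling at $\ggtx$ then delivers the claim.

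The principal obstacle is that $g$ is only assumed right-continuous, so $v$ need not be continuous at the boundary of $B$, and the martingale property of $M$ stopped at $\ggtx$ must be handled with care. I would address this either by invoking optional-sampling theorems for right-continuous submartingales directly (noting that $u$, hence $v$, is right-continuous in $t$ by dominated convergence), or by first proving the theorem for continuous survival functions $g$ where the classical optimal-stopping machinery applies without friction, and then approximating a general $g$ from above by a decreasing sequence of continuous survival functions, passing to the limit using monotone convergence of the value function together with the stability of $B$ and of $\ggtx$ along the approximation.
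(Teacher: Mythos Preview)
Your reduction of \eqref{vopt} to the optional-stopping identity $v(t,x)=\E[v(t-\ggtx,x+W_{\ggtx})]$ is correct, and this is a genuinely different route from the paper's. The paper does not argue via a dynamic programming/martingale verification at all; instead it approximates $B$ by the discrete-time barriers $B_n$ of Section~\ref{S:uniqueness} (adding the point $t$ to $A_n(b)$), invokes the discrete case (Corollary~\ref{CX}) to get $v_n(t,x)=\E[g_n(t-\ggtxn)\etta_{\{\ggtxn<t\}}+\etta_{\{\ggtxn=t,\,x+W_t\ge0\}}]$, proves $\ggtxn\to\ggtx$ by the analogue of Lemma~\ref{LP4}, and then passes to the limit via Fatou's lemma together with the convergence $v_n\to v$ established in the proof of Theorem~\ref{main3}. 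Your approach, if completed, would be a self-contained verification argument; the paper's buys simplicity by reusing the discrete machinery already in place.

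However, your handling of the ``principal obstacle'' has a real gap. First, the right-continuity remark points the wrong way: since $M_s=v(t-s,x+W_s)$, right-continuity of $M$ in $s$ requires \emph{left}-continuity of $v$ in its time variable, and $u(r,y)=\P(W_r\le y,\tau_B>r)$ fails to be left-continuous at $r$ exactly when $g$ has an atom there (the left limit picks up $\{\tau_B\ge r\}$ rather than $\{\tau_B>r\}$). So $M$ is in general only left-continuous, and optional sampling at $\ggtx$ is not available off the shelf. Second, your alternative of approximating $g$ from above by continuous survival functions would force you to prove that the associated barriers and reflected hitting times converge to $B$ and $\ggtx$; this stability is delicate, and the paper achieves it only by discretising $B$ itself, where $B_n\subseteq B$ gives $\ggtxn\ge\ggtx$ monotonically and the argument of Lemma~\ref{LP4} then yields convergence. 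For an approximation of $g$ by continuous functions there is no such monotone inclusion of barriers, and you have supplied no argument for the required stability.
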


The proofs of Theorems~\ref{main}, \ref{main2}, \ref{main3} and \ref{Tstopp} are
carried out in Sections~\ref{auxiliary}--\ref{S:stopp} 
below. 
Moreover, in Section~\ref{regularity} we show, under additional assumptions on the survival distribution,
that the solution of the inverse first-passage problem is continuous, and in
Section~\ref{S:inteqn} that it is characterised as the unique solution of an integral equation.

\section{An auxiliary result}
\label{auxiliary}

Let $B$ be a given barrier, and let $\tau_B$ be the corresponding first
hitting time. 
Define as in \eqref{g} and \eqref{u} 
the functions $g:[0,\infty)\to[0,1]$ 
and $u:[0,\infty)\times\R\to[0,1]$ by 
$g(t):=\P(\tau_B>t)$
and $u(t,x)=\P(W_t\leq x, \tau_B>t)$.
Then $g$ is either a survival distribution function or $g=0$ (which happens if $\tau_B=0$), 
the function $u$ is non-decreasing in $x$ with $0\leq u(t,x)\leq g(t)$, and
\[\lim_{x\to-\infty}u(t,x)=0\mbox{ and }\lim_{x\to\infty}u(t,x)=\P(\tau_B>t)=g(t).\] 

\begin{remark}
Since $\P(W_t=x)=0$ for any $(t,x)\in(0,\infty)\times\R$, we have 
\[u(t,x)=\P(W_t\leq x, \tau_B>t)=\P(W_t< x, \tau_B>t)\]
for any $t>0$. In particular, 
\[u(t,b(t))=\P(W_t< b(t), \tau_B>t)=\P(\tau_B>t)=g(t).\]
\end{remark}

The following result shows that a standard barrier $B$ can be 
recovered from $u$.

\begin{proposition}
\label{recoveryfromu}
Assume that $B$ is a standard barrier with $\tau_B>0$ a.s., 
and let $g$ and $u$ be defined by
\eqref{g} and \eqref{u}, respectively.
Then 
\begin{equation}
  \label{Bfromu}
B=\{(t,x)\in [0,\infty)\times\R :u(t,x)=g(t)\}.
\end{equation}
\end{proposition}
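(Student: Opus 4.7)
The plan is to prove the two inclusions in \eqref{Bfromu} separately.

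The forward inclusion $B\subseteq\{u=g\}$ is essentially a tautology. For $(t,x)\in B$ with $t>0$, one has $x\ge b(t)$, and the event $\{\tau_B>t\}$ forces $W_s<b(s)$ for all $s\in(0,t]$; in particular $W_t<b(t)\le x$, so $\{\tau_B>t\}\subseteq\{W_t\le x,\tau_B>t\}$ and $u(t,x)=g(t)$. The case $t=0$ follows from $b(0)=0$ by standardness together with $W_0=0$, which gives $u(0,x)=\etta\{x\ge 0\}$, matching $g(0)=1$ exactly on $B\cap\{t=0\}=\{x\ge 0\}$.

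For the reverse inclusion I would show that $x<b(t)$ implies $\P(W_t>x,\tau_B>t)>0$. The strategy is a tube argument via the Brownian support theorem: for $y\in(x,b(t))$, produce a continuous $\phi:[0,t]\to\R$ with $\phi(0)=0$, $\phi(t)\in(x,y)$, and $\phi(s)<b(s)$ for every $s\in(0,t]$. To build $\phi$, I use that standardness combined with $b(t)>-\infty$ (the case $b(t)=-\infty$ being vacuous) forces $b$ to be real-valued on $[0,t]$; the l.s.c.\ function $b$ is then bounded below by some $M\in\R$ on the compact $[0,t]$, and l.s.c.\ at $t$ provides $\delta>0$ with $b>y$ on $(t-\delta,t]$. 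A piecewise-linear $\phi$ that dips quickly to $M-1$, stays there until $t-\delta$, then rises linearly into $(x,y)$ at $t$ fulfils the requirements.

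The main obstacle is that $\phi(0)=b(0)=0$ prevents a uniform positive gap between $\phi$ and $b$ on all of $[0,t]$. On any subinterval $[s_0,t]$ with $s_0>0$, however, the l.s.c.\ function $b-\phi$ is strictly positive and attains its minimum on the compact set, giving $\varepsilon_0:=\min_{[s_0,t]}(b-\phi)>0$, so the tube event $\{\sup_{s\in[s_0,t]}|W_s-\phi(s)|<\varepsilon\}$ with $\varepsilon<\varepsilon_0$ forces $W_s<b(s)$ on $[s_0,t]$. To absorb the initial segment $[0,s_0]$, I would apply the strong Markov property at $s_0$: since $\P(\tau_B>s_0)>0$, the sub-distribution of $W_{s_0}$ on $\{\tau_B>s_0\}$ is non-trivial, and one can localize $W_{s_0}$ near $\phi(s_0)$ and then apply the support theorem to the independent continuation $\tilde W_r:=W_{s_0+r}-W_{s_0}$ to obtain $\P(W_t>x,\tau_B>t)>0$. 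The degenerate case $g(t)=0$ is handled automatically, since a successful tube argument would yield $g(t)\ge\P(\tau_B>t)>0$, so $g(t)=0$ forces $b(t)=-\infty$ and the reverse inclusion is vacuous at such $t$.
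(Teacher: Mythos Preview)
Your overall strategy matches the paper's: both establish the forward inclusion from $\{\tau_B>t\}\subseteq\{W_t<b(t)\}$, and both prove the reverse inclusion by a tube argument using the lower semi-continuity of $b$, splitting off an initial time interval to avoid the failure of a uniform gap between the path and $b$ at $s=0$.

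There is, however, a gap in your localization step. You pass from ``the sub-distribution of $W_{s_0}$ on $\{\tau_B>s_0\}$ is non-trivial'' to ``one can localize $W_{s_0}$ near $\phi(s_0)$'', but non-triviality only guarantees positive mass near \emph{some} point $z<b(s_0)$, not near the pre-chosen level $\phi(s_0)=M-1$. Proving the latter would itself require a tube argument on $(0,s_0]$, with exactly the same difficulty at $s=0$ that you are trying to circumvent. The paper handles this by reversing the order of construction: it first picks $t_0\in(0,t)$ and an interval below $b(t_0)$ on which $W_{t_0}$ has positive sub-probability on $\{\tau_B>t_0\}$ (such an interval exists trivially since the total mass is positive), and only then builds the tube on $[t_0,t]$, starting from that interval rather than from a predetermined level. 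Your argument is easily repaired the same way: let the killed Brownian motion dictate the starting level $z$ at time $s_0$, use l.s.c.\ of $b$ at $s_0$ to keep the tube below $b$ while it drops from $z$ to $M-1$ on a short interval, and then continue as you described.

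A small wording issue: standardness together with $b(t)>-\infty$ only rules out $b=-\infty$ on $[0,t]$, not $b=+\infty$. What your construction actually uses, and what does follow from l.s.c.\ plus $b>-\infty$ on the compact $[0,t]$, is that $b$ is bounded \emph{below} there.
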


\begin{proof}
Define
\[D:=\{(t,x)\in [0,\infty)\times\R :u(t,x)=g(t)\}.\]
We thus claim $B=D$.

First note that $u(0,x)=\P(0\leq x, \tau_B>0)=\etta_{[0,\infty)}(x)$ and $g(0)=1$, 
so $u(0,x)=g(0)$ if and only if $x\geq 0=b(0)$. Consequently, $B\cap \{t=0\}=D\cap\{t=0\}$.
Similarly, if $t\geq T_B$ then $u(t,x)=g(t)=0$, so $u(t,x)=g(t)$ for $x>-\infty=b(t)$. 
Thus $B\cap \{t\geq T_B\}=D\cap \{t\geq T_B\}$.
Next, if $(t,x)\in B$ with $t>0$, then 
\[u(t,x) = \P(W_t\leq x, \tau_B>t)=
\P(W_t\leq b(t), \tau_B>t)=\P(\tau_B>t)=g(t),\]
so
\[B\cap\{t>0\}\subseteq D\cap\{t>0\}.\]

It remains to show that 
\begin{equation}
\label{BequalsD} 
B\cap\{0<t<T_B\}\supseteq D\cap\{0<t<T_B\}.
\end{equation}
To do that, assume that $t\in(0,T_B)$ and $(t,x)\notin B$. Since $\tau_B>0$ a.s., there exists 
a time point $t_0\in(0,t)$ and an $\ep>0$ such that the probability 
of the event $\{\tau_B>t_0, W_{t_0}\in (b(t_0)-\ep,b(t_0)-2\ep)\}$ is strictly positive.
Since $b$ is lower semi-continuous, one can find two continuous functions $a_1(t)$ and $a_2(t)$ on $[t_0,t]$ 
such that 
\begin{itemize}
\item
$(a_1(t_0),a_2(t_0))\supseteq (b(t_0)-\ep/2,b(t_0)-5\ep/2)$,
\item
$(a_1(t),a_2(t))\subseteq (x,b(t))$,
\item
$\min_{s\in[t_0,t]} (a_2(s)-a_1(s))> 0$, 
\item
$\min_{s\in[t_0,t]}(b(s)-a_2(s))>0$.
\end{itemize}
It follows that 
\begin{eqnarray*}
\P(W_{t}\in (x,b(t)),\tau_B>t) &\geq& \P(W_{s}\in (a_1(s),a_2(s))\forall s\in[t_0,t],\tau_B>t_0)\\
&>& 0.
\end{eqnarray*}
Consequently, 
\[u(t,x)=\P(W_t\leq x,\tau_B>t)< \P(W_{t}\leq b(t),\tau_B>t)=g(t),\]
so $(t,x)\notin D$. Thus \eqref{BequalsD} holds, which finishes the proof.
\end{proof}

\section{The discrete problems}

In this section we prove Theorems~\ref{main} and \ref{main3} in the case of a given {\em discrete} survival distribution.
To do this, assume that we are given a survival distribution function $g:[0,\infty)\to[0,1]$
which is piecewise constant outside a finite set of 
distinct points $\{t_k\}_{k=1}^N$, where 
\[0<t_1<t_{2}<\dots<t_N<\infty.\] 
Setting $t_0=0$ and $t_{N+1}=\infty$, by right-continuity, $g$ is constant on each interval 
$[t_k,t_{k+1})$, $k \in\{0,1,\dots,N\}$. 
Since $g$ is non-increasing, we necessarily have $g(t_k)-g(t_{k+1})\geq 0$, but we
do not assume strict inequality.

\subsection{The discrete inverse first passage problem}
\label{discrete-inverse}

Let a piecewise constant survival distribution function $g$ be given. We construct a barrier $B$ (equivalently, a boundary $b$) 
and its corresponding distribution function 
\[u(t,x)=\P(W_t\leq x, \tau_B>t)\]
recursively as follows.

\begin{itemize}
\item
Set $b(t_0)=0$, and let $b(t)=\infty$ for $t\in(t_0,t_1)$.
Then $\tau_B$ has no mass in $[t_0,t_1)$, so $u(t,x)=\P(W_t\leq x)$ for $t<t_1$. 
Note that $x\mapsto \P(W_{t_1}\leq x)$ is strictly increasing and maps $[-\infty,\infty]$ onto $[0,g(t_0)]=[0,1]$.
Let $b(t_1)\in[-\infty,\infty]$ be the unique value such that $\P(W_{t_1}\leq b(t_1))=g(t_1)$.
If $b(t_1)=-\infty$, set $b(t)=-\infty$ also for $t\in(t_1,\infty)$, which finishes the construction. 
Otherwise, if $b(t_1)>-\infty$, then set $b(t)=\infty$ for $t\in(t_1,t_2)$.
In this way, $\P(\tau_B>t)=g(t)$ on $[0,t_2)$.
\item
Assume that $b(t_i)>-\infty$, $i=1,\dots,k$, that $b(t)=\infty$ for $t\in(0,t_{k+1})\setminus\{t_1,\dots,t_k\}$, and that 
$\P(\tau_B>t)=g(t)$, $t\in(0,t_{k+1})$.
Then $u(t,x)=\P(W_t\leq x, \tau_B>t)=\P(W_t\leq x, \tau_B>t_k)$
for $t\in(t_k,t_{k+1})$, and $x\mapsto\P(W_{t_{k+1}}\leq x, \tau_B>t_k)$ is strictly  increasing, mapping $[-\infty,\infty]$ onto 
$[0,g(t_k)]$. Let $b(t_{k+1})\in[-\infty,\infty]$ be the unique value such that 
$\P(W_{t_{k+1}}\leq b(t_{k+1}), \tau_B>t_k)=g(t_{k+1})$.
If $b(t_{k+1})=-\infty$, then we put $b(t)=-\infty$ for all $t>t_{k+1}$, and the construction is finished.
Otherwise, set $b(t)=\infty$ for $t\in (t_{k+1},t_{k+2})$, and then this second step is repeated for $k+1$ instead of $k$.
\end{itemize}
The above construction is terminated either if $b(t_k)=-\infty$ for some $t_k$ or when $b(t_k)$ has been determined for
all $k=1,2,\dots,N$. By construction, the barrier $B$ solves the discrete inverse first-passage problem, and it is easy to see that
the solution is unique in the class of standard barriers.
We have the following inductive representation of the corresponding distribution function $u$.

\begin{theorem}
\label{forward}
{\bf (An inductive scheme for the discrete inverse first-passage problem.)}
Let $B$ be the barrier constructed above which solves the discrete inverse first-passage problem, and let 
\[u(t,x)=\P(W_t\leq x,\tau_B>t).\]
Then $u$ satisfies the following inductive scheme.
\begin{itemize}
\item
For $t=t_0=0$,
\begin{equation}
\label{initialcond}
u(0,x)=
\begin{cases}
1, & x\geq  0\\
0, & x< 0.  
\end{cases}
\end{equation}
\item
On each strip $(t_k,t_{k+1})\times \R$, $k\geq 0$, we have
\begin{equation}
\label{forwardeq}
u(t,x)=  \E u( t_k, x+W_{t-t_k}).
\end{equation}
\item
For $t=t_{k+1}$, $k\geq 0$,
\begin{equation}
\label{updating}
u(t_{k+1},x)= \min\left\{\E u(t_{k},x+W_{t_{k+1}-t_k}), g(t_{k+1})\right\}.
\end{equation}
\end{itemize}
\end{theorem}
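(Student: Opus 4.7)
The plan is to verify the three identities in order, using only the explicit recursive definition of $B$ together with basic properties of Brownian motion (independence of increments, Gaussian symmetry). The initial condition \eqref{initialcond} is immediate: by the construction $b\equiv\infty$ on $(0,t_1)$, so $\tau_B>0$ a.s., and since $W_0=0$ we have $u(0,x)=\P(0\leq x,\tau_B>0)=\etta_{\{x\geq 0\}}$.

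For the strip equation \eqref{forwardeq} on $(t_k,t_{k+1})\times\R$, the key observation is that $b\equiv\infty$ on the whole open interval $(t_k,t_{k+1})$, which forces $\{\tau_B>t\}=\{\tau_B>t_k\}$ for every such $t$. I would decompose $W_t=W_{t_k}+(W_t-W_{t_k})$ and condition on $\F_{t_k}$, using that the increment is independent of $\F_{t_k}$ and $N(0,t-t_k)$-distributed, to write $u(t,x)=\E[\etta_{\{\tau_B>t_k\}}H(x-W_{t_k})]$, where $H(z):=\P(W_{t-t_k}\leq z)$. Treating the $W_{t-t_k}$ appearing inside $\E u(t_k,x+W_{t-t_k})$ as an independent Gaussian and applying Fubini produces the same expression, after using the symmetry identity $\P(W_{t-t_k}\geq W_{t_k}-x)=H(x-W_{t_k})$.

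For the updating formula \eqref{updating}, the same observation that $b\equiv\infty$ on $(t_k,t_{k+1})$ yields, up to a null set,
\[\{\tau_B>t_{k+1}\}=\{\tau_B>t_k\}\cap\{W_{t_{k+1}}\leq b(t_{k+1})\},\]
so that $u(t_{k+1},x)=\P(W_{t_{k+1}}\leq x\wedge b(t_{k+1}),\tau_B>t_k)$. I would then split into two cases. If $x\leq b(t_{k+1})$, the constraint at $t_{k+1}$ is automatic, so this coincides with the left-limit $\lim_{s\upto t_{k+1}}u(s,x)$, which by the strip equation and bounded convergence equals $\E u(t_k,x+W_{t_{k+1}-t_k})$. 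If $x\geq b(t_{k+1})$, it reduces to $\P(W_{t_{k+1}}\leq b(t_{k+1}),\tau_B>t_k)$, which by the defining property of $b(t_{k+1})$ in the recursion equals $g(t_{k+1})$. Monotonicity of $y\mapsto\E u(t_k,y+W_{t_{k+1}-t_k})$, together with the fact that it takes the value $g(t_{k+1})$ at $y=b(t_{k+1})$, then reconciles the two cases with the $\min$ in \eqref{updating}.

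I do not anticipate serious obstacles. The main care required is in handling the degenerate possibilities $b(t_{k+1})\in\{-\infty,+\infty\}$ (corresponding respectively to $g(t_{k+1})=0$ and $g(t_{k+1})=g(t_k)$); these are absorbed by the usual conventions and the fact, built into the recursion, that $y\mapsto\P(W_{t_{k+1}}\leq y,\tau_B>t_k)$ is a strictly increasing bijection $[-\infty,\infty]\to[0,g(t_k)]$.
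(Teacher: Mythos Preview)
Your proposal is correct and follows essentially the same route as the paper: both arguments use that $b\equiv\infty$ on $(t_k,t_{k+1})$ to reduce $\{\tau_B>t\}$ to $\{\tau_B>t_k\}$, then exploit the independence and symmetry of the Brownian increment $W_t-W_{t_k}$ to obtain \eqref{forwardeq}, and both derive \eqref{updating} from $u(t_{k+1},x)=\P(W_{t_{k+1}}\leq x\wedge b(t_{k+1}),\tau_B>t_k)$ together with the defining property of $b(t_{k+1})$. The only cosmetic differences are that the paper conditions on the increment rather than on $\F_{t_k}$, and it writes the $\min$ in \eqref{updating} directly as $\P(W_{t_{k+1}}\leq x,\tau_B>t_k)\wedge g(t_{k+1})$ instead of splitting into the two cases $x\lessgtr b(t_{k+1})$.
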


\begin{proof}
The initial condition \eqref{initialcond} is immediate from $W_0=0$ and $\tau_B>0$.
Next, if $t\geq t_k$, let $s:=t-t_k$ and let $W_s^\prime:=W_t-W_{t_k}$; then $W^\prime_s\overset{d}{=}W_s$ and 
$W^\prime_s$ is independent of $(W_r)_{r\leq t_k}$. Thus, by conditioning on $W^\prime_s$,
\begin{eqnarray*}
\P(W_t\leq x,\tau_B>t_k) &=& \P( W_{t_k}\leq x-W^\prime_s,\tau_B>t_k)\\
&=& \E u(t_k,x-W_s^\prime) = \E u(t_k, x-W_{t-t_k}).
\end{eqnarray*}
Hence, if $t\in[t_k,t_{k+1})$, then 
\[u(t,x)=\P(W_t\leq x, \tau_B>t) = \P(W_t\leq x, \tau_B>t_k)
= \E u(t_k, x-W_{t-t_k})\]
which proves \eqref{forwardeq},
since $W_{t-t_k}$ is symmetric.

Furthermore, for $k<N$, $\tau_B> t_{k+1}$ if and only if $\tau_B>t_k$ and $W_{t_{k+1}}< b(t_{k+1})$, and thus,
recalling the definition of $b(t_{k+1})$,
\begin{eqnarray*}
u(t_{k+1},x) &=& \P(W_{t_{k+1}}\leq x\wedge b(t_{k+1}),\tau_B>t_k)\\
&=& \P(W_{t_{k+1}}\leq x, \tau_B>t_k)\wedge g(t_{k+1}) \\
&=& \E u(t_{k},x+W_{t_{k+1}-t_k})\wedge g(t_{k+1}),
\end{eqnarray*}
which proves \eqref{updating}.
\end{proof}

\subsection{The discrete optimal stopping problem}

In this subsection we study the associated optimal stopping problem when the given survival distribution function 
$g$ is piecewise constant. 

\begin{theorem}
\label{backward}
{\bf (An inductive scheme for the discrete optimal stopping problem.)}
Assume that $g$ is a piecewise constant survival distribution function.
Then the optimal stopping value $v$ satisfies the following inductive scheme.

\noindent
\begin{itemize}
\item
For $t= t_0=0$,
\begin{equation}
\label{termcond}
v(0,x)=
\begin{cases}
1, & x\geq  0\\
0, & x< 0.  
\end{cases}
\end{equation}
\item
For $t\in ( t_k, t_{k+1})$, $k\geq 0$,
\begin{equation}
\label{backwind0}
v(t,x)=  \E v( t_k, x+W_{t-t_k}).
\end{equation}
\item
For $t=t_{k+1}$, 
\begin{equation}
\label{backwind}
v(t_{k+1},x)= \min\left\{ \E v( t_k,x+W_{t_{k+1}-t_k}),  g( t_{k+1})\right\}.
\end{equation}
\end{itemize}
\end{theorem}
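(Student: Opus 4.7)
The plan is to exploit the Bellman (dynamic programming) principle together with the piecewise constant structure of $g$ on each interval $[t_k,t_{k+1})$.

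The initial condition at $t=t_0=0$ is immediate: the only element of $\mathcal T[0,0]$ is $\gamma\equiv 0$, whence $\{\gamma<0\}=\emptyset$ and $v(0,x)=\P(x+W_0\ge 0)=\etta_{[0,\infty)}(x)$, which is \eqref{termcond}.

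Next I would establish the following Bellman equation: for any $s\in[0,t]$,
\begin{equation*}
v(t,x)=\inf_{\sigma\in\mathcal T[0,s]}\E\bigl[g(t-\sigma)\etta_{\{\sigma<s\}}+v(t-s,x+W_s)\etta_{\{\sigma=s\}}\bigr].
\end{equation*}
The inequality $\le$ is obtained by concatenating any $\sigma\in\mathcal T[0,s]$ with an $\varepsilon$-optimal stopping time for the shifted problem on $[s,t]$ starting from $x+W_s$; the reverse inequality follows by taking any $\gamma\in\mathcal T[0,t]$, setting $\sigma:=\gamma\wedge s$, and invoking the strong Markov property of $W$ at the deterministic time $s$ to bound the conditional payoff on $\{\gamma\ge s\}$ from below by $v(t-s,x+W_s)$. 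This is standard optimal stopping theory and, apart from a measurable $\varepsilon$-selection, constitutes the main technical step of the proof.

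With the Bellman equation in hand, I would deduce \eqref{backwind0} by applying it with $s:=t-t_k>0$. Any $\sigma<s$ then satisfies $t-\sigma\in(t_k,t]\subseteq[t_k,t_{k+1})$, so $g(t-\sigma)=g(t_k)$. Since $v(t_k,y)\le g(t_k)$ for every $y$ (by the bound $v\le g$ noted in Section~2), the integrand is pointwise at least $v(t_k,x+W_s)$, and equality is attained by $\sigma\equiv s$; hence $v(t,x)=\E v(t_k,x+W_{t-t_k})$.

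Finally, for \eqref{backwind} I would apply the Bellman equation with $s:=t_{k+1}-t_k$. For $\sigma\in(0,s)$ one has $t_{k+1}-\sigma\in(t_k,t_{k+1})$ so $g(t_{k+1}-\sigma)=g(t_k)\ge g(t_{k+1})$, while $\sigma=0$ gives $g(t_{k+1})$. The event $\{\sigma=0\}\in\F_0$ has probability $0$ or $1$ by Blumenthal's $0$--$1$ law, so either $\sigma\equiv 0$ (expected payoff $g(t_{k+1})$) or $\sigma>0$ a.s., in which case the bound $g(t_k)\ge v(t_k,\cdot)$ shows the expected payoff is at least $\E v(t_k,x+W_s)$. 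The two pure strategies $\sigma\equiv 0$ and $\sigma\equiv s$ show both values are attained, yielding
\[v(t_{k+1},x)=\min\bigl\{g(t_{k+1}),\,\E v(t_k,x+W_{t_{k+1}-t_k})\bigr\},\]
which is \eqref{backwind}.
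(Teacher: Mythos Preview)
Your argument is correct. The paper takes a different and somewhat more elementary route: rather than establishing a continuous-time Bellman principle, it first shows directly that $v=\overline v$, where $\overline v$ is defined by the same infimum but restricted to stopping times taking values in the finite set $\{t-t_k:k\ge0\}\cap[0,t]$. The inequality $v\le\overline v$ is trivial, and for $v\ge\overline v$ one rounds any $\gamma\in\mathcal T[0,t]$ up to the next grid point $\overline\gamma:=\inf\{s\ge\gamma:s=t-t_k\text{ for some }k\}$; the piecewise-constant structure of $g$ guarantees the payoff does not increase (with strict decrease possible only when $\gamma\in(t-t_1,t)$). Once $v=\overline v$, \eqref{backwind0} and \eqref{backwind} are just the standard backward induction for a finite-horizon discrete-time optimal stopping problem.

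The paper's reduction sidesteps the measurable $\varepsilon$-selection you flag (in discrete time the optimal rule is explicit), and it also avoids the Blumenthal $0$--$1$ law step, since on the grid there is no distinction between ``$\sigma=0$'' and ``$\sigma>0$ small''. Your approach, by contrast, makes the dynamic-programming structure of \eqref{v} explicit and would transfer unchanged to situations where a reduction to a finite set of stopping times is not available; it also yields \eqref{backwind0} and \eqref{backwind} as two instances of a single identity rather than as consequences of a separate discrete-time theory.
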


\begin{proof}
The initial condition \eqref{termcond} follows immediately from the definition of $v$.
For $t\geq 0$, let $\overline{\mathcal T}[0,t]$ be the set of stopping times taking values in 
$\{ t-t_k,k\geq 0\}\cap[0,t]$, and define
\[\overline v(t,x)=\inf_{\gamma\in\overline{\mathcal T}[0,t]}
\E\left[  g(t-\gamma)\etta_{\{\gamma<t\}}+ \etta_{\{\gamma=t, x+W_t\geq 0\}}\right].\]
Then clearly $v\leq\overline v$ since $\overline{\mathcal T}[0,t]\subseteq \mathcal T[0,t]$.
On the other hand, given a stopping time $\gamma\in\mathcal T[0,t]$, let
\[\overline\gamma:=\inf\{s\geq \gamma:s= t-t_k\mbox{ for some }k\ge0\}.\]
Then $\overline\gamma\in\overline{\mathcal T}[0,t]$ and 
\[g(t-\gamma)\etta_{\{\gamma<t\}}+ \etta_{\{\gamma=t, x+W_t\geq 0\}}\geq
g(t-\overline\gamma)\etta_{\{\overline\gamma<t\}}+ \etta_{\{\overline\gamma=t, x+W_t\geq 0\}}\]
(strict inequality may happen if $\gamma$ takes values in $( t- t_1,t)$). This implies that also
$v\geq\overline v$, so $v=\overline v$.

Thus $v$ coincides with the value function $\overline v$ of a discrete time
optimal stopping problem.
It is well-known, and easy to see, that such a value function 
can be determined using backward induction (see for example \cite[Chapter
  I.1]{PS}), \eqref{backwind0} and \eqref{backwind}  
follow.
In fact,  an optimal stopping time
in $\overline{\mathcal T}[0,t]$ is obtained by stopping at $t-t_k\ge0$ if
$v(t_k,x+W_{t-t_k})=g(t_k)$; more precisely, we stop at the first such $t-t_k$,
i.e., the one with the largest $t_k$, and if no such $t_k\le t$ exists, then
we stop at $t$. 
\end{proof}

\begin{corollary}
  \label{Cu=v}
Assume $B$ is a barrier  
such that the survival distribution function $g$ is piecewise constant,
Then $u(t,x)= v(t,x)$ for all $(t,x)\in[0,\infty)\times \R$.
In other words, 
Theorem \ref{main3} holds in the discrete case.
\end{corollary}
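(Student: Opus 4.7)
The plan is to exhibit identical recursions for $u$ and $v$, with the same initial condition at $t=0$, and then conclude equality by induction through the breakpoints $t_0<t_1<\dots<t_N$.

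First I would reduce to the case of a standard barrier: since $g$ is right-continuous with $g(0)=1$, we have $\tau_B>0$ a.s., so the remark following Theorem~\ref{main2} lets me replace $B$ by the standard barrier $\overline B$ from \eqref{barb} without affecting $\tau_B$ or $u$. Next, the recursive construction of Section~\ref{discrete-inverse} produces a standard barrier with survival function $g$, and (as noted there) the standard solution of the discrete inverse first-passage problem is unique; hence $B$ must coincide with the constructed barrier, and Theorem~\ref{forward} applies to give the forward scheme \eqref{initialcond}--\eqref{updating} for $u$. On the other hand, Theorem~\ref{backward} yields literally the same scheme \eqref{termcond}--\eqref{backwind} for $v$, again starting from the common initial condition at $t=0$.

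Equality of $u$ and $v$ then follows by induction on $k$: assuming $u(t_k,\cdot)=v(t_k,\cdot)$, formulas \eqref{forwardeq} and \eqref{backwind0} give $u(t,\cdot)=v(t,\cdot)$ on the open strip $(t_k,t_{k+1})$, and \eqref{updating} together with \eqref{backwind} delivers $u(t_{k+1},\cdot)=v(t_{k+1},\cdot)$. The one step that could require care is the identification of $B$ with the constructed barrier, i.e., invoking uniqueness of the standard solution in order to apply Theorem~\ref{forward} to the given $B$; once that is in hand, the argument is a transparent side-by-side comparison of two identical schemes.
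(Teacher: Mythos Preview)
Your argument is correct and follows essentially the same route as the paper: reduce to a standard barrier via \eqref{barb}, invoke uniqueness of the standard solution of the discrete inverse first-passage problem to identify $B$ with the constructed barrier, and then observe that Theorems~\ref{forward} and \ref{backward} give identical inductive schemes for $u$ and $v$. The only difference is presentational order (the paper first treats the standard case and then reduces), and you spell out the induction through the breakpoints a bit more explicitly than the paper does.
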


\begin{proof}
Suppose first that $B$ is standard. As said above, the solution to the
discrete inverse first-passage problem is unique in the class of standard
barriers, so $B$ equals the barrier constructed from $g$ above.
The result now follows from Theorems~\ref{forward} and \ref{backward}. 

In general, we can replace $B$ by a standard barrier $\overline B$ by
\eqref{barb} without changing $\tau_B$; thus $B$ and $\overline B$ have the
same $u(t,x)$ and $v(t,x)$, and the general result follows.
\end{proof}

\begin{corollary}
\label{CX}
Theorems \ref{main} and \ref{Tstopp} hold in the special case of a piecewise
constant $g$.  
\end{corollary}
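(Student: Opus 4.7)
The plan is to identify the set defined in \eqref{Bfromv} with the standard barrier produced by the explicit construction of Section~\ref{discrete-inverse}, and then extract both conclusions with almost no further work. Let $B^*$ denote that standard barrier, with boundary function $b^*$. By construction, $\P(\tau_{B^*}>t)=g(t)$ for every $t\ge 0$, and since $b^*\equiv +\infty$ on $(0,t_1)$ we have $\tau_{B^*}>0$ a.s. Setting $u(t,x):=\P(W_t\le x,\tau_{B^*}>t)$, Corollary~\ref{Cu=v} gives $u=v$ on $[0,\infty)\times\R$.

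For Theorem~\ref{main}, Proposition~\ref{recoveryfromu} applies directly to $B^*$ and yields
\[B^*=\{(t,x):u(t,x)=g(t)\}=\{(t,x):v(t,x)=g(t)\},\]
so the set $B$ defined in \eqref{Bfromv} equals $B^*$. In particular $B$ is a barrier and satisfies $\P(\tau_B>t)=g(t)$, which is the assertion of Theorem~\ref{main}.

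For Theorem~\ref{Tstopp}, I would compare $\ggtx$ with the optimal stopping rule identified at the end of the proof of Theorem~\ref{backward}: in the discrete problem one stops at the largest $t_k\le t$ for which $v(t_k,x+W_{t-t_k})=g(t_k)$, and at time $t$ if no such $t_k$ occurs. Using $v=u$ together with $B^*=\{v=g\}$, this rule is equivalent to stopping at the smallest $s\in\{t-t_k:0\le t_k\le t\}$ for which $x+W_s\ge b^*(t_k)$. On the other hand, since $b^*$ is finite only on the grid $\{t_0,\ldots,t_N\}$, the reflected boundary $\bbt$ equals $+\infty$ off the shifted grid $\{t-t_k:0\le t_k<t\}$ and equals $-\infty$ for $s\ge t$; hence the definition \eqref{ggtx} produces exactly the same stopping time (modulo null events where $x+W_s=b^*(t_k)$). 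Plugging $\ggtx$ into the right-hand side of \eqref{vopt} therefore reproduces the value computed by backward induction, which equals $v(t,x)$.

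The only genuine work is bookkeeping at the endpoints. One should verify that (a) $\ggtx=0$ corresponds to $x\ge b^*(t)$ when $t$ is itself a grid point; (b) $\ggtx=t$ corresponds to the case where the discrete rule triggers no early stop, so that the indicator $\etta_{\{\ggtx=t,\,x+W_t\ge 0\}}$ matches the terminal payoff used in backward induction; and (c) once the construction has set $b^*\equiv -\infty$ past some $t_k$, both rules cease to produce a nontrivial stop, but the corresponding contribution to the expectation is $0$ since $g$ vanishes on that range. These are immediate from the definitions, and no further probabilistic input is required beyond what Corollary~\ref{Cu=v} and Proposition~\ref{recoveryfromu} already supply.
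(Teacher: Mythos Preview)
Your proposal is correct and follows essentially the same route as the paper: invoke Corollary~\ref{Cu=v} to get $u=v$, apply Proposition~\ref{recoveryfromu} to identify the constructed barrier with the set $\{v=g\}$, and then match the backward-induction stopping rule from Theorem~\ref{backward} with $\ggtx$ by translating the condition $v(t_k,x+W_{t-t_k})=g(t_k)$ into $x+W_{t-t_k}\ge b^*(t_k)$ via $B^*=\{v=g\}$. The paper's proof is the same argument, only more tersely written; your parenthetical ``modulo null events where $x+W_s=b^*(t_k)$'' is unnecessary since both rules use weak inequalities and hence coincide exactly, not merely almost surely.
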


\begin{proof}
It follows from Corollary \ref{Cu=v} and Proposition~\ref{recoveryfromu}
that the discrete inverse first passage problem is solved by
\begin{equation}\label{cb}
B=\{(t,x)\in[0,\infty)\times\R:v(t,x)=g(t)\},  
\end{equation}
which verifies Theorem~\ref{main} in this case.

Moreover, an optimal stopping time for \eqref{v} in the discrete case is
given by the rule at the end of the proof of Theorem \ref{backward}.
 This stopping time $\gamma$ is given by the smallest
$s$ of the form $s=t-t_k$ with $t_k\le t$ and $v(t_k,x+W_{t-t_k})=g(t_k)$;
if no such $t_k$ exists, then $\gamma=t$.
However, using \eqref{cb} and \eqref{bbt}, for $t_k\le t$, 
\begin{equation*}
  \begin{split}
v(t_k,x+W_{t-t_k})=g(t_k)	
&\iff
(t_k,x+W_{t-t_k})\in B
\iff
x+W_{t-t_k} \ge b(t_k)
\\&
\iff
x+W_{t-t_k} \ge \bbt(t-t_k).
  \end{split}
\end{equation*}
Hence,
$\gamma=\inf\bigset{s\in\set{t-t_k:k\ge0}\cap[0,t]:x+W_s\ge\bbt(s)}\wedge t$.
Furthermore, $b(s)=\infty$ if $s\notin\set{t_k}$, so $\bbt(s)=\infty$ if
$s\in[0,t)\setminus\set{t-t_k:k\ge1}$, while $\bbt(t)=-\infty$.
Hence, 
$\gamma=\inf\bigset{s\ge0:x+W_s\ge\bbt(s)}=\ggtx$.
\end{proof}

\section{The general case}
\label{general}

In this section we prove Theorem~\ref{main} by approximating the survival distribution function with 
piecewise constant ones. 

To do that, let $g$ be a given survival distribution function, and for each $n\geq 1$ 
let $\mathbb T^n=\{t^n_k,k=0,\dots,N^n+1\}$ be a set such that
\begin{itemize}
\item
$0=t^n_0<t^n_1<\dots<t^n_{N^n}<t^n_{N^n+1}=\infty$;
\item
$g(t^n_{k})-g(t^n_{k+1}-)\leq 1/n$ for $k=0,\dots,N^n$;
\item
$\mathbb T^n\subseteq \mathbb T^{n+1}$.
\end{itemize}
It is easily seen that such sets exist. Moreover, 
note that all times $t$ such that $g(t-)-g(t)>1/n$ are necessarily contained in $\mathbb T^n$; hence all jump 
times of $g$ are contained in $\cup_{n=1}^\infty \mathbb T^n$.
Define piecewise constant survival distribution functions $g^n$ by 
\[g^n(t):=g(t_k^n)\mbox{ for }t\in[t_k^n,t_{k+1}^n),\]
and note that
\begin{equation}
\label{gn}
g^{n}(t)-1/n\leq g(t)\leq g^n(t)
\end{equation}
for all $t$. Let $B^n$ be the barrier for $g^n$ constructed in Section~\ref{discrete-inverse}, let 
\[\tau_{B^n}= \inf\{t>0:(W_t,t)\in B^n\}\]
be the corresponding hitting time so that 
\[\P(\tau_{B^n}>t)=g^n(t)\]
for $t\geq 0$, and let 
\[u^n(t,x)=\P(W_t\leq x,\tau_{B^n}>t).\]

We apply the following theorem from \cite{A}.

\begin{theorem}
\label{anulova}
There exists a subsequence $n_m$ and a standard barrier $B$ such that
\begin{itemize}
\item[(i)]
$B$ solves the inverse first passage problem, i.e. $\P(\tau_B>t)=g(t)$;
\item[(ii)]
$\tau_{B^{n_m}}\to \tau_B$  a.s.\ as $m\to\infty$.
\end{itemize}
\end{theorem}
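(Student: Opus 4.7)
The plan is to use a compactness argument to extract a convergent subsequence of the approximating barriers and then identify the limit as a barrier solving the inverse first-passage problem for $g$.

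\textbf{Step 1 (Compactness).} Equip the space of closed subsets of $[0,\infty)\times\R$ with the Fell (equivalently Kuratowski--Painlev\'e) topology, which is compact. This yields a subsequence $n_m$ along which $B^{n_m}$ converges to a closed set $B$.

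\textbf{Step 2 ($B$ is a barrier).} Closedness is automatic. Upward-closure in $x$ passes to the limit: if $(t,x)\in B$ and $y\ge x$, take $(t_m,x_m)\in B^{n_m}$ with $(t_m,x_m)\to(t,x)$; for $m$ large $x_m\le y$, and upward-closure of $B^{n_m}$ yields $(t_m,y)\in B^{n_m}$, so $(t,y)\in B$. Also, since $g^n(s)\to 1$ as $s\downto 0$ uniformly in $n$, the boundaries $b^{n_m}$ do not collapse to the origin, so $\tau_B>0$ a.s.

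\textbf{Step 3 (Lower bound for hitting times).} Along a further subsequence where $\tau_{B^{n_m}}\to\sigma$, the points $(\tau_{B^{n_m}},W_{\tau_{B^{n_m}}})\in B^{n_m}$ converge by continuity of $W$ to $(\sigma,W_\sigma)\in B$; hence $\tau_B\le\sigma$ a.s., so $\liminf_m \tau_{B^{n_m}}\ge \tau_B$ a.s.

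\textbf{Step 4 (Upper bound and distribution).} Since $(\tau_B,W_{\tau_B})\in B$, the inner Kuratowski limit gives points $(s_m,y_m)\in B^{n_m}$ with $(s_m,y_m)\to(\tau_B,W_{\tau_B})$. Combining this with the oscillation property of Brownian motion (in any right-neighborhood of $\tau_B$, $W$ strictly exceeds $W_{\tau_B}$), and exploiting the upward-closure of $B^{n_m}$ to pass the excursion up to hit the barrier, one shows that for every $\eps>0$ there is $m_0$ such that $\tau_{B^{n_m}}\le \tau_B+\eps$ for $m\ge m_0$, whence $\limsup_m \tau_{B^{n_m}}\le \tau_B$ a.s. Together with Step 3 this gives a.s.\ convergence. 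Then, from $\P(\tau_{B^{n_m}}>t)=g^{n_m}(t)\to g(t)$ at continuity points of $g$, passage to the limit yields $\P(\tau_B>t)=g(t)$ at continuity points, and right-continuity of both sides extends this to all $t\ge 0$.

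\textbf{Step 5 (Standard form).} Finally, replace $B$ by the standard modification $\overline B$ constructed via \eqref{barb} without changing the hitting time.

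The main obstacle is the upper bound in Step 4. Hitting times are upper semicontinuous in the boundary only under some regularity, and the danger is that $B$ could be "thin" in a way that the approximating barriers sit above it and get hit only strictly later. This is ruled out here by the upward-closure property of each $B^n$, which together with the oscillation of Brownian motion at the point $(\tau_B,W_{\tau_B})$ forces a short excursion above $W_{\tau_B}$ to cross the approximating barriers for all sufficiently large $m$. Once a.s.\ convergence of hitting times is established, the identification $\P(\tau_B>t)=g(t)$ is a straightforward passage to the limit using the uniform estimate \eqref{gn}.
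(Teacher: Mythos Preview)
First, note that the paper does not supply its own proof of this statement: Theorem~\ref{anulova} is quoted from Anulova~\cite{A}, and the Remark following it explains only that her argument for two-sided barriers carries over and that convergence in probability can be upgraded to a.s.\ convergence along a further subsequence. Your outline is therefore not a reconstruction of something in the paper but an attempt at a direct proof of Anulova's result; the Fell/Kuratowski compactness scheme you describe is indeed the natural route.

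Steps 1--3 and 5 are essentially fine, but Step 4 has a real gap. The oscillation fact you invoke --- that $W$ exceeds $W_{\tau_B}$ arbitrarily soon after $\tau_B$ --- does not by itself force $\tau_{B^{n_m}}\le\tau_B+\eps$. The barriers $B^{n_m}$ live on the discrete time-grids $\mathbb T^{n_m}$, so you must exhibit a grid time $s\in\mathbb T^{n_m}$ near $\tau_B$ at which \emph{simultaneously} $b^{n_m}(s)$ is small and $W_s$ is large. The inner Kuratowski limit at $(\tau_B,W_{\tau_B})$ gives $(s_m,y_m)\in B^{n_m}$ with $s_m\to\tau_B$ and $b^{n_m}(s_m)\le y_m\to W_{\tau_B}$; but $W_{s_m}\to W_{\tau_B}$ as well, and Brownian motion oscillates on \emph{both} sides of $W_{\tau_B}$, so there is no reason that $W_{s_m}\ge b^{n_m}(s_m)$ at those particular grid times. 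Upward-closure of $B^{n_m}$ only lets you raise $y_m$, which goes the wrong way. What is actually needed is a \emph{strict} crossing of the \emph{limit} boundary, i.e.\ a time $t^*\in(\tau_B,\tau_B+\eps)$ with $W_{t^*}>b(t^*)$; this is exactly the statement $\tau_B=\tau'_B$ (Lemma~\ref{LP2} in the paper, whose proof is independent of Theorem~\ref{anulova}). With that in hand one applies the inner approximation at $(t^*,b(t^*))$ rather than at $(\tau_B,W_{\tau_B})$: then $b^{n_m}(s_m)\le y_m\to b(t^*)<W_{t^*}$ while $W_{s_m}\to W_{t^*}$, so for large $m$ the desired inequality $W_{s_m}>b^{n_m}(s_m)$ holds with room to spare, yielding $\tau_{B^{n_m}}\le s_m<\tau_B+\eps$.
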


\begin{remark}
In \cite{A}, Anulova considered two-sided (symmetric) barriers of the type
\[B=\{(t,x)\in [0,\infty)\times\R:\vert x\vert\geq b(t)\},\]
where $b\geq 0$. It is straightforward to 
check that the results of \cite{A} carry over also to our setting with one-sided barriers. 
Statement (i) in Theorem~\ref{anulova} is the main result of \cite{A} (we use a different discretisation of the time axis than Anulova does, 
but this does not influence her result).
Moreover, Anulova proves the existence of a subsequence along which the corresponding hitting times
converge in probability. Since any sequence that converges in probability has a subsequence that converges almost surely, 
this yields (ii).
\end{remark}

Let $u(t,x)=\P(W_t\leq x,\tau_{B}>t)$, where $B$ is the barrier in Theorem~\ref{anulova}.

\begin{corollary}
\label{conv}
We have 
$u^{n_m}(t,x)\to u(t,x)$ as $m\to\infty$ for $(t,x)\in[0,\infty)\times\R$.
\end{corollary}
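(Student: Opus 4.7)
The plan is to bound $|u^{n_m}(t,x) - u(t,x)|$ by $\P\bigl(\{\tau_{B^{n_m}}>t\}\triangle\{\tau_B>t\}\bigr)$ and show this tends to zero by combining the a.s.\ convergence of hitting times from Theorem~\ref{anulova} with the convergence of the marginal probabilities $g^{n_m}(t)\to g(t)$.

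First I would verify the uniform bound $0\le g^n(t)-g(t)\le 1/n$ for every $t$. Indeed, if $t_k^n\le t<t_{k+1}^n$, monotonicity of $g$ yields $g^n(t)=g(t_k^n)\ge g(t)\ge g(t_{k+1}^n-)$, and by construction $g(t_k^n)-g(t_{k+1}^n-)\le 1/n$. In particular, $\P(\tau_{B^{n_m}}>t)=g^{n_m}(t)\to g(t)=\P(\tau_B>t)$.

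Next, by Theorem~\ref{anulova}(ii), $\tau_{B^{n_m}}\to\tau_B$ a.s., so on the event $\{\tau_B>t\}$ we have $\tau_{B^{n_m}}>t$ for all large $m$. Bounded convergence then gives $\P(\tau_{B^{n_m}}\le t,\tau_B>t)\to 0$. The identity
\[
\P(\tau_{B^{n_m}}>t,\tau_B\le t)-\P(\tau_{B^{n_m}}\le t,\tau_B>t)=\P(\tau_{B^{n_m}}>t)-\P(\tau_B>t)
\]
combined with the two convergences just obtained forces also $\P(\tau_{B^{n_m}}>t,\tau_B\le t)\to 0$. The conclusion then follows from
\[
|u^{n_m}(t,x)-u(t,x)|\le \P\bigl(\{\tau_{B^{n_m}}>t\}\triangle\{\tau_B>t\}\bigr).
\]

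The main obstacle is the possible atom of $\tau_B$ at $t$, which can occur precisely when $t$ is a jump point of $g$: on $\{\tau_B=t\}$ the indicator $\etta_{\{\tau_{B^{n_m}}>t\}}$ need not converge almost surely, so a direct dominated-convergence argument applied to the single event $\{\tau_{B^{n_m}}>t\}$ is not available. The plan sidesteps this by controlling the two asymmetric errors separately---one half by pure a.s.\ convergence on $\{\tau_B>t\}$, the other half by balancing against the known convergence of the marginals $g^{n_m}(t)\to g(t)$.
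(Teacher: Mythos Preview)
Your proof is correct and follows essentially the same argument as the paper: bound the difference by the probability of the symmetric difference of the events $\{\tau_{B^{n_m}}>t\}$ and $\{\tau_B>t\}$, handle one half via the a.s.\ convergence $\tau_{B^{n_m}}\to\tau_B$ on $\{\tau_B>t\}$, and then recover the other half from the identity linking the two asymmetric errors to $g^{n_m}(t)-g(t)$. Your added remark on why the atom of $\tau_B$ at $t$ prevents a direct dominated-convergence argument is a nice clarification, but the proof itself matches the paper's.
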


\begin{proof}
Let $(t,x)\in[0,\infty)\times\R$.
By the almost sure convergence of $\tau_{B^{n_m}}$ to $\tau_B$ we have that
\[\liminf_{m\to\infty} \{\tau_{B^{n_m}}>t\}\supseteq \{\tau_{B}>t\},\]
and thus 
\[\P(\tau_B>t\geq \tau_{B^{n_m}})\to 0.\] 
Furthermore, using \eqref{gn},
$\P(\tau_{B^{n_m}}>t)=g^{n_m}(t)\to g(t)=\P(\tau_{B}>t)$ as $m\to\infty$.
Hence
\[\P(\tau_{B^{n_m}}>t\geq \tau_B)=\P(\tau_{B^{n_m}}>t) + \P(\tau_B>t\geq \tau_{B^{n_m}})-\P(\tau_B>t)\to 0\]
as $m\to\infty$.
Consequently,
\begin{eqnarray*}
\vert u^{n_m}(t,x)-u(t,x)\vert &\leq& \P(\tau_{B^{n_m}}>t\geq \tau_B)+ \P(\tau_{B}>t\geq\tau_{B^{n_m}}) \to 0
\end{eqnarray*}
as $m\to\infty$, which finishes the proof.
\end{proof}

We next show that Theorem \ref{main3} holds for the barriers constructed by
the procedure above.

\begin{corollary}\label{Cu=v2}
  Let $B$ be the barrier in Theorem \ref{anulova}. Then
$u(t,x)=v(t,x)$ 
for all $(t,x)\in[0,\infty)\times\R$.
\end{corollary}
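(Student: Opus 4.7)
The plan is to pass to the limit in the identity $u^n=v^n$ which holds for each piecewise constant approximation by Corollary~\ref{Cu=v}. More precisely, for each $n$ let $v^n$ denote the value of the optimal stopping problem defined by \eqref{v} but with $g$ replaced by $g^n$. Since $g^n$ is piecewise constant, Corollary~\ref{Cu=v} applied to the barrier $B^n$ gives $u^n(t,x)=v^n(t,x)$ for every $(t,x)\in[0,\infty)\times\R$. The target identity $u=v$ will therefore follow as soon as we show $u^{n_m}\to u$ and $v^{n_m}\to v$ pointwise along the subsequence from Theorem~\ref{anulova}.

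The first convergence is already furnished by Corollary~\ref{conv}. For the second, I would use the uniform bound $|g^n-g|\le 1/n$ coming from \eqref{gn}. For any stopping time $\gamma\in\mathcal T[0,t]$, the difference between the pay-offs associated with $g^n$ and $g$ is
\begin{equation*}
\bigl(g^n(t-\gamma)-g(t-\gamma)\bigr)\etta_{\{\gamma<t\}},
\end{equation*}
whose absolute value is at most $1/n$. Taking expectation and then infimum over $\gamma\in\mathcal T[0,t]$ (which is the same index set for both problems, since it depends only on $W$ and $t$, not on $g$) yields
\begin{equation*}
|v^n(t,x)-v(t,x)|\le 1/n
\end{equation*}
uniformly in $(t,x)$, so in particular $v^{n_m}(t,x)\to v(t,x)$.

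Combining these three ingredients, for each fixed $(t,x)$,
\begin{equation*}
u(t,x)=\lim_{m\to\infty}u^{n_m}(t,x)=\lim_{m\to\infty}v^{n_m}(t,x)=v(t,x),
\end{equation*}
which is the claim. There is no real obstacle here: the approximation is designed precisely so that $g^n\to g$ uniformly, which makes the optimal stopping values converge uniformly, while the hitting-time convergence from Theorem~\ref{anulova} handles the $u$ side. The only small thing to double-check is that the optimal stopping problem in \eqref{v} is defined so that the admissible class $\mathcal T[0,t]$ does not depend on the survival function, which is clear from the definition.
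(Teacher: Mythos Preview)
Your proof is correct and follows essentially the same approach as the paper: both use Corollary~\ref{Cu=v} to get $u^n=v^n$, invoke Corollary~\ref{conv} for $u^{n_m}\to u$, and deduce $v^n\to v$ from the uniform bound \eqref{gn}. The paper phrases the last step as $v^n-1/n\le v\le v^n$ (using $g\le g^n\le g+1/n$), which is the same estimate you obtain.
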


\begin{proof}
Let
\[v(t,x)=\inf_{\gamma\in\mathcal T[0,t]}\E\left[g(t-\gamma)\etta_{\{\gamma<t\}}+ \etta_{\{\gamma=t, x+W_t\geq 0\}}\right]\]
and
\[v^n(t,x)=\inf_{\gamma\in\mathcal T[0,t]}\E\left[g^n(t-\gamma)\etta_{\{\gamma<t\}}+ \etta_{\{\gamma=t, x+W_t\geq 0\}}\right]\]
be the value functions of the corresponding optimal stopping problems.
By Corollary \ref{Cu=v},
$u^n(t,x)=v^n(t,x)$
for all $(t,x)\in[0,\infty)\times\R$.
Moreover, \eqref{gn} yields that $v^n-1/n\leq v\leq v^n$, so $v^n\to v$
as $n\to\infty$, and, along the subsequence $n_m$, 
$u^{n_m}\to u$ by Corollary~\ref{conv}. Hence $u=v$.
\end{proof}

\begin{proof}[Proof of Theorem~\ref{main}]
The barrier $B$ in Theorem \ref{anulova} satisfies $\P(\tau_B>t)=g(t)$.
Furthermore,
it follows from Proposition~\ref{recoveryfromu} that
$B$ can be recovered from $u$ and $g$ by \eqref{Bfromu}, 
and thus \eqref{Bfromv} follows
by Corollary \ref{Cu=v2}.
\end{proof}

\section{Uniqueness of solutions to the inverse first-passage problem}
\label{S:uniqueness}

Theorem~\ref{main} shows that a solution of the inverse first-passage problem is provided by 
the associated optimal stopping problem. 
In this section we study uniqueness of solutions. In \cite{CCCS2}, a uniqueness result is proved for the class of continuous
survival distributions using viscosity solutions of the associated variational inequality. Our proof is partly adapted from
their analysis, but using the connection with the associated optimal stopping problem allows us to extend the
uniqueness result to arbitrary survival distributions.

For a given standard barrier $B$ with boundary $b$, let, as in \cite{CCCS2}, 
\[t_k^n=\inf\left\{t\in\left[\frac{k}{2^n},\frac{k+1}{2^n}\right]:b(t)=\inf_{s\in[\frac{k}{2^n},\frac{k+1}{2^n}]}b(s)\right\}\]
for $k,n\geq 1$, and define 
\[A^1_n(b)=\{t_k^n:k=1,2,\dots,n2^n\}.\]
Moreover, let $t_1,t_2,\dots$ be an enumeration of the set $\{t:\P(\tau_B=t)>0\}$, let
\[A^2_n(b)=\{t_1,t_2,\dots,t_n\},\]
and set $A_n(b)=A^1_n(b)\cup A^2_n(b)$.
Then $A_n(b)\subseteq A_{n+1}(b)$, and we define $A(b)=\cup_{n=1}^\infty
A_n(b)$. 
Define the barrier $B_n:=\set{(t,x)\in B:t\in A_n(b)}$
and  the corresponding stopping time 
\begin{equation}
  \label{taun}
\tau_n:=\tau_{B_n}=\min\{t\in A_n(b):W_t\geq b(t)\}.
\end{equation}
Then $\tau_n\geq \tau_{n+1}\geq \tau_B$.
\begin{lemma}
  \label{LP4}
$\lim_\ntoo \tau_n = \tau_B$ a.s.
\end{lemma}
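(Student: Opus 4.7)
The plan is to prove $\tau_n \downarrow \tau_B$ almost surely. Since $A_n \subseteq A_{n+1}$, the sequence $\tau_n$ is nonincreasing, and when finite each $\tau_n$ equals some $t > 0$ with $W_t \geq b(t)$, so $\tau_n \geq \tau_B$. Thus $\tau_\infty := \lim_n \tau_n$ exists a.s.\ with $\tau_\infty \geq \tau_B$, and the task reduces to establishing $\tau_\infty \leq \tau_B$. The key observation is that $\tau_n \leq \tau_B$ iff $\tau_B \in A_n$, because $W_{\tau_B} \geq b(\tau_B)$ by closedness of $B$. Hence on the event that $\tau_B$ equals one of the deterministic atoms $t_k$ of its distribution, $\tau_B = t_k \in A_n^2 \subseteq A_n$ for all $n \geq k$, and $\tau_n = \tau_B$ for large $n$.

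On the complementary diffuse event, where $\tau_B$ carries no deterministic mass, I would exploit the dyadic minimizers in $A_n^1$. Setting $k_n := \lfloor 2^n \tau_B \rfloor$ and $s_n := t_{k_n}^n \in A_n^1$, we have $|s_n - \tau_B| \leq 2^{-n}$, so $s_n \to \tau_B$ and $W_{s_n} \to W_{\tau_B}$ by continuity of the Brownian paths. The minimizing property yields $b(s_n) \leq b(\tau_B)$, while lower semicontinuity gives $\liminf_n b(s_n) \geq b(\tau_B)$, so $b(s_n) \to b(\tau_B)$. If $W_{\tau_B} > b(\tau_B)$, then $W_{s_n} > b(s_n)$ for all large $n$, hence $\tau_n \leq s_n \leq \tau_B + 2^{-n}$, which combined with $\tau_n \geq \tau_B$ yields $\tau_n \to \tau_B$.

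The main obstacle is the ``touching'' subcase $W_{\tau_B} = b(\tau_B)$ within the diffuse event, where the difference $W_{s_n} - b(s_n)$ tends to zero but possibly from below, so the $s_n$ may not serve as candidates for $\tau_n$. Here my plan is to invoke the strong Markov property at $\tau_B$: conditionally on $\mathcal{F}_{\tau_B}$, the process $W'_h := W_{\tau_B + h} - W_{\tau_B}$ is a fresh standard Brownian motion with $W'_0 = 0$, and by Blumenthal's 0-1 law together with the law of iterated logarithm, $\limsup_{h \downarrow 0} W'_h = +\infty$ a.s. Combining this oscillation with the lower semicontinuity of $b$ (which forces $b(s) \geq b(\tau_B) - o(1)$ for $s \downarrow \tau_B$) and with the fact that $b(t_{k_n+j}^n)$ is the \emph{minimum} of $b$ on $[(k_n+j)/2^n,(k_n+j+1)/2^n]$ (so its value is pinned close to $b(\tau_B)$ for small $j$), a Borel--Cantelli-type argument, patterned on the classical proof that dyadic discretisations of the first-passage time of a Brownian motion over a smooth boundary converge a.s., shows that almost surely there exist $s \in A_n^1$ arbitrarily close to $\tau_B$ from above with $W_s \geq b(s)$. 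This gives $\tau_n \to \tau_B$ and completes the argument.
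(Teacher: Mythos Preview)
Your proof correctly handles the atomic case and the strict-overcrossing case $W_{\tau_B}>b(\tau_B)$; the gap is in the touching subcase $W_{\tau_B}=b(\tau_B)$ on the diffuse part.

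Two concrete problems there. First, the claim that $b(t_{k_n+j}^n)$ is ``pinned close to $b(\tau_B)$'' is unjustified: lower semicontinuity gives only $\liminf_{s\to\tau_B}b(s)\ge b(\tau_B)$, a one-sided bound. Nearby values of $b$, and hence the dyadic minima on intervals \emph{not} containing $\tau_B$, can be arbitrarily large (even $+\infty$); no amount of Brownian oscillation will then produce $W_s\ge b(s)$. Second, even your primary candidate $s_n=t_{k_n}^n$ can fail systematically: if $b$ is, say, increasing, the minimiser is the left endpoint $k_n/2^n<\tau_B$, whence $W_{s_n}<b(s_n)$ automatically for every $n$. (A minor slip: $\limsup_{h\downarrow0}W'_h=0$, not $+\infty$; you presumably mean that $W'$ takes positive values for arbitrarily small $h$.) The concluding appeal to ``a Borel--Cantelli-type argument'' for smooth boundaries does not transfer to merely lower semicontinuous $b$, and is not a proof as it stands.

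The paper avoids this entirely by first establishing Lemma~\ref{LP2}: $\tau'_B:=\inf\{t>0:W_t>b(t)\}=\tau_B$ a.s. The short proof uses that $Z_{s,t}:=\sup_{s\le u\le t}(W_u-b(u))$ has a continuous distribution for $0<s<t$, so $\P(Z_{s,t}=0)=0$; hence $\tau_B\in[s,t]$ forces $Z_{s,t}>0$ a.s., i.e.\ a strict overcrossing in $[s,t]$. Granting this, for any $\epsilon>0$ there is $t<\tau_B+\epsilon$ with $W_t>b(t)$; continuity of $W$ yields $W_u>b(t)$ for $|u-t|<\delta$, and the dyadic minimiser $t_k^n$ in the interval containing $t$ satisfies $b(t_k^n)\le b(t)<W_{t_k^n}$, so $\tau_n\le t_k^n\le\tau_B+\epsilon+2^{-n}$. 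This is precisely the missing ingredient in your argument: the touching case requires knowing that \emph{strict} overcrossing occurs immediately after $\tau_B$, and that fact needs a separate probabilistic argument, not just path oscillation against an uncontrolled boundary.
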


This  follows from \cite[Propositions 2 and 4]{CCCS2}, but for completeness,
we give a simplified version of the proof given 
(for a more general situation)
there.
We define the stopping time
\begin{equation}
  \taux_B:=\inf\set{t>0:W_t>b(t)}.
\end{equation}
Obviously, $\taux_B\ge\tau_B$.
We first show
the following result, saying that  equality holds, which is part of 
\cite[Proposition 2]{CCCS2}.
\begin{lemma}[\cite{CCCS2}]
  \label{LP2}
$\taux_B=\tau_B$ a.s.
\end{lemma}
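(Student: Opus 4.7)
The inequality $\taux_B \geq \tau_B$ is immediate from the inclusion $\set{W_t>b(t)}\subseteq\set{W_t\geq b(t)}$, so the substance is to prove $\taux_B \leq \tau_B$ a.s. This is trivial on $\set{\tau_B=\infty}$; on $\set{\tau_B<\infty}$, closedness of $B$ gives $W_{\tau_B}\geq b(\tau_B)$, and on the sub-event $\set{W_{\tau_B}>b(\tau_B)}$ the time $\tau_B$ itself satisfies $W_{\tau_B}>b(\tau_B)$, so $\taux_B\leq\tau_B$ there.

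It remains to treat $E:=\set{\tau_B<\infty,\,W_{\tau_B}=b(\tau_B)}$. My plan for $E$ is to invoke the strong Markov property at $\tau_B$: conditionally on $\F_{\tau_B}$, the process $\hat W_s:=W_{\tau_B+s}-b(\tau_B)$ is a standard Brownian motion starting at $0$, independent of $\F_{\tau_B}$. Writing $\hat b(s):=b(\tau_B+s)-b(\tau_B)$, which is lower semicontinuous with $\hat b(0)=0$, the claim $\taux_B\leq\tau_B$ reduces to showing that, a.s.\ on $E$, $0$ is an accumulation point of $\set{s>0:\hat W_s>\hat b(s)}$. Since this is a germ event of the Brownian motion $\hat W$, Blumenthal's $0$--$1$ law constrains its conditional probability given $\F_{\tau_B}$ to lie in $\set{0,1}$, and it suffices to rule out the value $0$.

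The main obstacle will be controlling $\hat b$ to the right of $0$. Lower semicontinuity alone yields only $\liminf_{s\downto 0}\hat b(s)\geq 0$, which does not preclude $\hat b$ staying strictly positive in every right neighborhood of $0$. To close this gap, I plan to use the hypothesis that $\tau_B$ is the \emph{first} hitting time: a genuine upward right-jump of $b$ at $\tau_B$ forces $W_{\tau_B}$ to equal the ``lower'' value $b(\tau_B)$ at a jump point of $b$, and for each fixed $t_0$ the coincidence $\set{W_{t_0}=b(t_0)}$ has probability $0$. A measure-theoretic argument stratifying the upward-jump set of $b$ by jump size, combined with this pointwise vanishing, should give $\lim_{s\downto 0}\hat b(s)=0$ a.s.\ on $E$. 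Once this right-regularity is in hand, the law of the iterated logarithm $\limsup_{s\downto 0}\hat W_s/\sqrt{2s\log\log(1/s)}=1$ combined with Blumenthal's $0$--$1$ law produces arbitrarily small $s>0$ with $\hat W_s>\hat b(s)$, giving $\taux_B\leq\tau_B$ on $E$ and completing the proof.
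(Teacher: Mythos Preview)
Your proposal has a genuine gap at the final step. Even if you succeed in proving that $\lim_{s\downto 0}\hat b(s)=0$ a.s.\ on $E$, the law of the iterated logarithm does \emph{not} produce arbitrarily small $s>0$ with $\hat W_s>\hat b(s)$: you would need a \emph{rate}, namely $\hat b(s)=o\bigl(\sqrt{2s\log\log(1/s)}\bigr)$, and nothing in your argument supplies one. For a concrete obstruction, if $\hat b(s)=s^{1/4}$ then $\hat b(s)\to 0$, yet a.s.\ $\hat W_s=O\bigl(\sqrt{s\log\log(1/s)}\bigr)=o(s^{1/4})$, so $\hat W_s<\hat b(s)$ for all sufficiently small $s$. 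Thus the conditional probability you are trying to show equals $1$ can in fact equal $0$ for such a $\hat b$, and your argument must somehow exclude this behaviour at $\tau_B$---mere convergence to $0$ is not enough. Your preceding step is also incomplete: for a lower semicontinuous $b$, the set $\bigl\{t:\limsup_{s\downto t}b(s)>b(t)\bigr\}$ need not be countable (take $b=0$ on a Cantor set $C\subset[0,1]$ and $b=1$ off $C$; every point of $C$ is an upward right-discontinuity), so ``stratify by jump size and use $\P(W_{t_0}=b(t_0))=0$ for each fixed $t_0$'' does not close the argument.

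The paper's proof bypasses all of this with a single observation. For $0<s<t$, the random variable $Z_{s,t}:=\sup_{s\le u\le t}\bigl(W_u-b(u)\bigr)$ can be written as $W_s+\sup_{s\le u\le t}\bigl(W_u-W_s-b(u)\bigr)$, the sum of two independent terms of which the first has a continuous law; hence $\P(Z_{s,t}=0)=0$. On $\{\tau_B\in[s,t]\}$ one has $Z_{s,t}\ge W_{\tau_B}-b(\tau_B)\ge 0$, so a.s.\ $Z_{s,t}>0$, whence $\taux_B\le t$. Running this over rational $0<s<t$ gives $\taux_B=\tau_B$ a.s., with no appeal to the strong Markov property, Blumenthal's law, or any right-regularity of $b$.
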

\begin{proof}
  Let $0<s<t$ and let $\zst:=\sup_{s\le u\le t}(W_u-b(u))$.
Then
\begin{equation*}
  \zst=W_s+\sup_{s\le u\le t}(W_u-W_s-b(u)),
\end{equation*}
where the two terms on the right-hand side are independent and $W_s$ has a
continuous distribution. Hence $\zst$ has a continuous distribution and
$\P(\zst=0)=0$.

Now suppose that $\tau_B\in[s,t]$. Then $\zst\ge W_{\tau_B}-b(\tau_B)\ge0$ and
thus 
a.s.\ $\zst>0$, which means that there exists $u\in[s,t]$ such that
$W_u-b(u)>0$ and thus $\taux_B\le u$. Since $\taux_B\ge\tau_B$, we then have
$s\le\tau_B\le\taux_B\le t$.

We have shown that if $0<s<t$, then a.s.
\begin{equation}\label{tau=}
  \tau_B\in[s,t]\implies \taux_B\in[s,t].
\end{equation}
Hence, a.s.\ \eqref{tau=} holds for all rational $s$ and $t$ with $0<s<t$,
which implies $\tau_B=\taux_B$.
\end{proof}

\begin{proof}[Proof of Lemma \ref{LP4}]
  As said above, $\tau_n\ge\tau_B$, so it suffices to show that
$\limsup_\ntoo\tau_n\le \tau_B$.
By Lemma \ref{LP2}, we may assume $\tau_B=\taux_B$, and since the result is
trivial when $\tau_B=\infty$, we may assume $\tau_B<\infty$.

Let $\eps>0$.
Since $\taux_B=\tau_B$, there exists $t<\tau_B+\eps$ such that $W_t>b(t)$.
By the continuity of $W$, there exists $\delta>0$ such that 
$W_u>b(t)$ if $|u-t|< \delta$.
Now assume that $n$ is so large that $2^{-n}<\delta$ and 
$2^{-n}<t<n$ and
let $k:=\lfloor 2^nt\rfloor$, so $t\in[k2^{-n},(k+1)2^{-n}]$.
Then $|t-t_k^n|\le 2^{-n}< \delta$ so $W_{t_k^n}>b(t)\ge b(t_k^n)$;
furthermore $t_k^n\in A_n^1(b)\subseteq A_n(b)$,  and thus by \eqref{taun},
$\tau_n\le t_k^n\le t+2^{-n}\le \tau_B+\eps+2^{-n}$.
Hence, $\tau_n\le\tau_B+2\eps$ for all large $n$.
\end{proof}

So far we have not needed the points in $A_n^2(b)$; however, they are
essential for the next result.
\begin{lemma}\label{Lg}
  As \ntoo, for every $t\ge 0$,
  \begin{equation}\label{gng}
	g_n(t):=\P(\tau_n>t)\to \P(\tau_B>t)=: g(t) .
  \end{equation}
\end{lemma}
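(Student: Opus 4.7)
The plan is to exploit the monotonicity of $\tau_n$. Since $\tau_n\ge\tau_{n+1}$, the events $\set{\tau_n>t}$ are decreasing in $n$, so by continuity of probability
\begin{equation*}
\lim_\ntoo g_n(t)=\P\Bigl(\bigcap_n\set{\tau_n>t}\Bigr).
\end{equation*}
Since $\tau_n\ge\tau_B$, the inclusion $\set{\tau_B>t}\subseteq\bigcap_n\set{\tau_n>t}$ is immediate, while on $\bigcap_n\set{\tau_n>t}$ Lemma~\ref{LP4} yields $\tau_B=\lim_n\tau_n\ge t$. Decomposing according to whether $\tau_B>t$ or $\tau_B=t$, the lemma reduces to showing
\begin{equation*}
\P\bigl(\tau_B=t,\ \tau_n>t\text{ for all }n\bigr)=0.
\end{equation*}

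Next I would split into two cases. If $\P(\tau_B=t)=0$ there is nothing to prove. Otherwise $t$ is an atom of the law of $\tau_B$, hence $t=t_j$ for some $j$ in the enumeration used to define $A_n^2(b)$, so $t\in A_n^2(b)\subseteq A_n(b)$ for every $n\ge j$. For such $n$, on the event $\set{\tau_B=t}$ the closedness of $B$ forces $(t,W_t)\in B$, i.e.\ $W_t\ge b(t)$; combined with $t\in A_n(b)$, the definition \eqref{taun} then gives $\tau_n\le t$, contradicting $\tau_n>t$. Hence $\set{\tau_B=t,\tau_n>t}=\emptyset$ for all sufficiently large $n$, which proves the required identity.

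The main obstacle is recognising why the atoms of $\tau_B$ must be put into $A_n^2(b)$: without including them in the grid, the event $\set{\tau_B=t}\cap\bigcap_n\set{\tau_n>t}$ could have positive mass at an atom $t$ of $\tau_B$, and $g_n$ would fail to converge to $g$ at such points. The component $A_n^2(b)$ of the grid is thereby used here in an essential way, whereas $A_n^1(b)$ alone was enough for Lemma~\ref{LP4}.
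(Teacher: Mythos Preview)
Your proof is correct and follows essentially the same approach as the paper: both exploit the monotonicity $\tau_n\ge\tau_{n+1}\ge\tau_B$, Lemma~\ref{LP4}, and the crucial fact that an atom $t$ of $\tau_B$ lies in $A_n^2(b)$ for large $n$, whence $\tau_B=t$ forces $\tau_n\le t$. The only cosmetic difference is that the paper first handles continuity points via convergence in distribution (obtaining $g(t)\le\lim g_n(t)\le g(t-)$) before turning to atoms, whereas you treat both cases uniformly through the identity $\lim_n g_n(t)=\P\bigl(\bigcap_n\{\tau_n>t\}\bigr)$; your organisation is slightly more streamlined but not substantively different.
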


\begin{proof}
  First consider a fixed $t\ge0$.
Since $\tau_n\ge\tau_{n+1}\ge\tau_B$, we have 
$g_n(t)\ge g_{n+1}(t)\ge g(t)$. In particular, $\lim_\ntoo g_n(t)$ exists.
Moreover, by Lemma \ref{LP4}, $\tau_n\to\tau$ a.s., and thus in distribution, 
whence
\begin{equation}
  g(t)=\P(\tau_B>t)\le\lim_\ntoo g_n(t)\le\P(\tau_B\ge t)=g(t-).
\end{equation}
In particular, \eqref{gng} holds for every $t$ such that $\P(\tau_B=t)=0$
and thus $g(t)=g(t-)$.

On the other hand, if $\P(\tau_B=t)>0$, then $t\in A_n^2(b)\subseteq A_n(b)$
for all 
sufficiently large $n$, and then $\tau_B=t$ implies $\tau_n= t$ by
\eqref{taun}. Consequently, if $\tau_n>t$ for all $n$, then $\tau_B\neq t$,
and thus a.s., $\tau_B=\lim_\ntoo\tau_n>t$.
Consequently,
\begin{equation}
\lim_\ntoo g_n(t)=
  \lim_\ntoo \P(\tau_n>t)
= \P\Bigpar{\bigcap_n \set{\tau_n>t}}
=\P(\tau_B>t)=g(t)
\end{equation}
and \eqref{gng} holds in this case too.
%
\end{proof}

\begin{remark}
  Using the sets $\mathbb T^n$ defined in Section \ref{general}, it is easy
  to show that, moreover,
$g_n(t)\to g(t)$ uniformly for all $t\ge0$.
We omit the details since we do not use this property. 
\end{remark}

\begin{proof}[Proof of Theorem \ref{main3}]
Let $u_n$ and $v_n$ be the functions given by \eqref{u} and \eqref{v} for
the barrier $B_n$.
Since $B_n$ solves the inverse first-passage problem for the piecewise
constant survival distribution function $g_n$, Corollary \ref{Cu=v} shows that
$u_n(t,x)=v_n(t,x)$ for all $(t,x)\in[0,\infty)\times \R$.

Moreover, for any $(t,x)\in[0,\infty)\times \R$,
	\begin{equation*}
	  \begin{split}
0\le u_n(t,x)-u(t,x) 
&=\P(W_t\leq x, \tau_B\le t<\tau_n)
\\
&\le \P(\tau_B\le t<\tau_n)=g_n(t)-g(t). 		
	  \end{split}
	\end{equation*}
Hence, Lemma \ref{Lg} implies that $u_n(t,x)\to u(t,x)$ as \ntoo.

Furthermore, 
since $g_n\ge g$, \eqref{v} yields $v_n(t,x)\ge v(t,x)$,
and thus
\begin{equation}
  \label{liminfvn}
\liminf_\ntoo v_n(t,x) \ge v(t,x).
\end{equation}
Next,
suppose that $\gamma\in\mathcal T[0,t]$.
Then
\begin{equation*}
 g_n(t-\gamma)\etta_{\{\gamma<t\}}+ \etta_{\{\gamma=t, x+W_t\geq
	0\}}
\to
 g(t-\gamma)\etta_{\{\gamma<t\}}+ \etta_{\{\gamma=t, x+W_t\geq 0\}}
\end{equation*}
a.s.\ by Lemma \ref{Lg}, and thus by dominated convergence
\begin{equation*}
\E\left[ g_n(t-\gamma)\etta_{\{\gamma<t\}}+ \etta_{\{\gamma=t, x+W_t\geq
	0\}}\right]
\to
\E\left[ g(t-\gamma)\etta_{\{\gamma<t\}}+ \etta_{\{\gamma=t, x+W_t\geq 0\}}\right]
\end{equation*}
The left-hand side is, by the definition \eqref{v}, at least $v_n(t,x)$, and thus
\begin{equation*}
\limsup_\ntoo v_n(t,x)
\le
\E\left[ g(t-\gamma)\etta_{\{\gamma<t\}}+ \etta_{\{\gamma=t, x+W_t\geq 0\}}\right].
\end{equation*}
Taking the infimum over all 
$\gamma\in\mathcal T[0,t]$, we obtain
\begin{equation}\label{limsupvn}
\limsup_\ntoo v_n(t,x)
\le v(t,x).
\end{equation}
Together, \eqref{liminfvn} and \eqref{limsupvn} yield
$v_n(t,x)\to v(t,x)$. 

Since $u_n(t,x)=v_n(t,x)$ thus converges to both $u(t,x)$ and $v(t,x)$, 
it follows that $u(t,x)=v(t,x)$.
\end{proof}

\begin{proof}[Proof of Theorem \ref{main2}]
  It is easy to see that the barrier \eqref{Bfromv} is standard.

If two standard barriers have the same survival distribution function $g$, then they have
the same function $v(x,t)$ by \eqref{v}, and thus the same function $u(x,t)$
by Theorem \ref{main3}. Proposition \ref{recoveryfromu} now implies that the
two barriers coincide.
\end{proof}

\section{An optimal stopping time}\label{S:stopp}

\begin{proof}[Proof of Theorem \ref{Tstopp}]
  The discrete case was shown in Corollary \ref{CX}.

For a general $g$, fix $t>0$ and
consider the discrete barriers $B_n$ constructed in
Section \ref{S:uniqueness}, now adding the point $t$ to $A_n(b)$
Consider also the corresponding boundaries $b_n(s)$ and
reflected boundaries $\bbt_n(s)$, and the stopping times $\ggtxn$ given by
\eqref{ggtx} with $\bbt_n(s)$.
Then $\ggtxn\ge \ggtx$, and thus
\begin{equation}
  g^n(t-\ggtxn)
\ge g^n(t-\ggtx)\ge g(t-\ggtx).
\end{equation}
Furthermore, in analogy with Lemma \ref{LP4},
$\ggtxn\to\ggtx$ a.s.; in fact the proof of Lemma \ref{LP4} holds without
changes when $\ggtx>0$, and if $\ggtx=0$, then $x\ge b(t)$, and because
we now have $t\in A_n(b)$, $b_n(t)=b(t)\le x$ so $\ggtxn=0$ for every $n$.
In particular, if $\ggtx<t$, then a.s.\ $\ggtxn<t$ for all large $n$.
Hence, a.s.,
\begin{multline*}
\liminf_\ntoo
\left[g^n(t-\ggtxn)\etta_{\{\ggtxn<t\}}+ \etta_{\{\ggtxn=t, x+W_t\geq
	0\}}\right]
\\
\ge 
\left[g(t-\ggtx)\etta_{\{\ggtx<t\}}+ \etta_{\{\ggtx=t, x+W_t\geq
	0\}}\right]
\end{multline*}
and Fatou's lemma yields
\begin{multline}\label{fatou}
\liminf_\ntoo\E
\left[g^n(t-\ggtxn)\etta_{\{\ggtxn<t\}}+ \etta_{\{\ggtxn=t, x+W_t\geq
	0\}}\right]
\\
\ge \E
\left[g(t-\ggtx)\etta_{\{\ggtx<t\}}+ \etta_{\{\ggtx=t, x+W_t\geq
	0\}}\right]
\end{multline}
The left-hand side of \eqref{fatou} 
is $\liminf_\ntoo v_n(t,x)$ by the discrete case,
and we have shown $v_n(t,x)\to v(t,x)$ in the proof of Theorem \ref{main3}.
Hence,
\begin{equation}
\E
\left[g(t-\ggtx)\etta_{\{\ggtx<t\}}+ \etta_{\{\ggtx=t, x+W_t\geq
	0\}}\right]
\le v(t,x),
\end{equation}
which shows that $\ggtx$ yields the infimum in \eqref{v}, i.e., \eqref{vopt} holds.
\end{proof}

\section{Continuity}
\label{regularity}

It is easy to see that if the boundary $b$ is continuous, then
the survival distribution function $g$ is continuous, but not conversely.
More precisely, we have the following result which says that $b$ may jump up
but not down for a continuous $g$.

\begin{theorem}
  \label{Tgcont}
Let $B$ be a standard barrier. Then the following are equivalent.
\begin{romenumerate}
\item \label{tcontg}
The survival distribution function
$g$ is continuous.

\item \label{tcontt}
For any fixed $t>0$, $\P(\tau_B=t)=0$.

\item \label{tcontliminf}
For every $t>0$,
\begin{equation}
\label{liminf}
\liminf_{s\upto t}b(s)=b(t)
\end{equation} 

\item \label{tcontw}
$W_{\tau_B}=b(\tau_B)$ a.s.\ when $\tau_B<\infty$.
\end{romenumerate}
\end{theorem}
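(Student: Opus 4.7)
The plan is to close the cycle \ref{tcontg}$\iff$\ref{tcontt}, \ref{tcontt}$\Rightarrow$\ref{tcontliminf}$\Rightarrow$\ref{tcontw}$\Rightarrow$\ref{tcontt}. For the first equivalence, I would simply note $g(t-)=\P(\tau_B\geq t)$ while $g(t)=\P(\tau_B>t)$, so $g(t-)-g(t)=\P(\tau_B=t)$; since $g$ is right-continuous by definition, continuity is equivalent to left-continuity at every $t>0$, i.e., $\P(\tau_B=t)=0$ for all $t>0$.

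For \ref{tcontw}$\Rightarrow$\ref{tcontt}, fix $t>0$. For any $b(t)\in[-\infty,\infty]$ we have $\P(W_t=b(t))=0$ (the real case since $W_t$ has a density, the infinite cases trivially). On $\{\tau_B=t\}\subseteq\{\tau_B<\infty\}$, hypothesis \ref{tcontw} forces $W_t=b(t)$ almost surely, so $\P(\tau_B=t)\leq\P(W_t=b(t))=0$.

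For \ref{tcontliminf}$\Rightarrow$\ref{tcontw}, suppose $\tau_B<\infty$. My first task is to rule out $b(\tau_B)=-\infty$: let $T_B:=\inf\{t:b(t)=-\infty\}$, and if $T_B<\infty$ then \ref{tcontliminf} applied at $T_B$ provides $s_n\uparrow T_B$ with $b(s_n)\to-\infty$, so $\P(W_{s_n}\geq b(s_n))\to 1$, whence $\P(\tau_B\leq s_n)\to 1$ and therefore $\tau_B<T_B$ a.s. Since $B$ is closed, $W_{\tau_B}\geq b(\tau_B)>-\infty$. If $W_{\tau_B}>b(\tau_B)+2\eta$ on a set of positive probability for some $\eta>0$, then \ref{tcontliminf} at $\tau_B$ produces $s_n\uparrow\tau_B$ with $b(s_n)<b(\tau_B)+\eta$ for large $n$, while path-continuity gives $W_{s_n}\to W_{\tau_B}>b(\tau_B)+\eta>b(s_n)$, contradicting the minimality of $\tau_B$.

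The main obstacle is \ref{tcontt}$\Rightarrow$\ref{tcontliminf}, which I would attack contrapositively: assume $\liminf_{s\uparrow t_0}b(s)>b(t_0)$ at some $t_0>0$ and construct a positive-probability event on which $\tau_B=t_0$. Pick a level $c$ strictly between $b(t_0)$ and the liminf, and $\epsilon>0$ so that $b(s)>c$ for all $s\in(t_0-\epsilon,t_0)$. When $b(t_0)\in\R$, the argument in Proposition~\ref{recoveryfromu} (applied at $t_0-\epsilon$, where $b(t_0-\epsilon)>c>b(t_0)$ places $(t_0-\epsilon,x)$ outside $B$ for $x$ just below $b(t_0)$) yields some $\delta>0$ with $\P(\tau_B>t_0-\epsilon,\,W_{t_0-\epsilon}\in(b(t_0)-\delta,b(t_0)))>0$; then by the strong Markov property at $t_0-\epsilon$, a standard Brownian tube estimate shows that, conditionally, $W$ stays below $c$ on $[t_0-\epsilon,t_0]$ and lands in $[b(t_0),c)$ at time $t_0$ with positive probability, whence $W_s<b(s)$ for every $s<t_0$ and $W_{t_0}\ge b(t_0)$, so $\tau_B=t_0$ on this event. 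When $b(t_0)=-\infty$, \ref{tcontliminf} can fail only at $t_0=T_B$ (for $t_0>T_B$, $b\equiv-\infty$ on $[T_B,t_0)$ forces the liminf to equal $-\infty$), and a simpler tube argument keeping $W<c$ on $[T_B-\epsilon,T_B]$ delivers $\P(\tau_B=T_B)>0$ using that $\tau_B\le T_B$ always. The crux throughout is extracting strict positivity from Proposition~\ref{recoveryfromu}; once that is in hand, the remaining steps are elementary continuity arguments.
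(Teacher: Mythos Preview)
Your proof is correct and follows exactly the same cycle as the paper: \ref{tcontg}$\iff$\ref{tcontt} via $g(t-)-g(t)=\P(\tau_B=t)$, then \ref{tcontt}$\Rightarrow$\ref{tcontliminf}$\Rightarrow$\ref{tcontw}$\Rightarrow$\ref{tcontt}. The paper is terser---for \ref{tcontt}$\Rightarrow$\ref{tcontliminf} it simply says ``by a similar argument as in the proof of Proposition~\ref{recoveryfromu} one can show that $\P(\tau_B=t)>0$''---whereas you spell out the tube construction and separately treat the case $b(t_0)=-\infty$; and for \ref{tcontliminf}$\Rightarrow$\ref{tcontw} you explicitly rule out $b(\tau_B)=-\infty$ before running the continuity argument, a point the paper leaves implicit.
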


\begin{proof}
\ref{tcontg}$\iff$\ref{tcontt}.
Obvious, since $\P(\tau_B=t)=g(t-)-g(t)$.

\ref{tcontt}$\implies$\ref{tcontliminf}.  
Recall that we know that $\liminf_{s\upto t}b(s)\geq b(t)$ since $b$
is lower semi-continuous.
To reach a contradiction, assume that \eqref{liminf} is not satisfied,
i.e. that there exists $t>0$ such that 
\[\liminf_{s\upto t} b(s)> b(t).\]
Then, by a similar argument as in the proof of
Proposition~\ref{recoveryfromu} one can show that 
$ \P(\tau_B=t)>0$, which contradicts \ref{tcontt}.
Thus \eqref{liminf} holds.

\ref{tcontliminf}$\implies$\ref{tcontw}.  
Since $W_{s}<b(s)$ for $s<\tau_B$, \eqref{liminf} and the continuity of $W$
imply 
$W_{\tau_B}=\lim_{s\upto \tau_B}W_s\le b(\tau_B)$ when $\tau_B<\infty$.
Furthermore, $W_{\tau_B}\ge b(\tau_B)$, and the result follows.

\ref{tcontw}$\implies$\ref{tcontt}.  
If \ref{tcontw} holds, then $\P(\tau_B=t)\le \P(W_t=b(t))=0$ for any  $t>0$.
\end{proof}

The converse, to find conditions on $g$ that correspond to continuous $b$
seems
more difficult. We do not know any necessary and sufficient condition, but
the next theorem yields a simple sufficient condition.

As a preparation, note that the function $g(t)$ is monotone, and thus 
has a derivative $g'(t)$ a.e., necessarily with $g'(t)\le0$.
Note that, for any interval $(a,b)\subseteq(0,\infty)$,
\begin{equation}\label{llip0}
 g(a)-g(b)=\int_a^b -g(dt) \ge \int_a^b -g'(t)\,dt, 
\end{equation}
with equality if the positive measure $-g(dt)$ is absolutely continuous, but
in general there may be strict inequality. Nevertheless, it is easy to see
from \eqref{llip0} and the definition of the derivative 
that for any interval $(t_0,t_1)$ and any constant $C>0$,
\begin{equation}
  \label{llip1}
-g'(t)\ge C  \text{ for a.e.\ } t\in(t_0,t_1)
\end{equation}
 if and only if
 \begin{equation}
   \label{llip2}
g(a)-g(b)\ge C(b-a)
 \end{equation}
for every interval $(a,b)\subseteq (t_0,t_1)$.

\begin{theorem}\label{Tbcont}
If the survival distribution function $g(t)$ is continuous and 
satisfies \eqref{llip1}, or equivalently \eqref{llip2},  for an interval
$(t_0,t_1)\subseteq (0,\infty)$ and some $C>0$,
then
the solution $b$ of the corresponding inverse first-passage problem is
continuous on $(t_0,t_1)$. 
\end{theorem}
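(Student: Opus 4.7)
The plan is to establish, at each $t^* \in (t_0, t_1)$, that $\lim_{s\to t^*} b(s) = b(t^*) =: c$; since $b$ is lower semi-continuous this reduces to showing $\liminf_{s \to t^*} b(s) \leq c$ and $\limsup_{s \to t^*} b(s) \leq c$. For the left-side liminf I would invoke Theorem~\ref{Tgcont}\ref{tcontliminf}, which (using continuity of $g$) gives $\liminf_{s \uparrow t^*} b(s) = c$ immediately; no new work is needed here.

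For the right-side liminf I would argue by contradiction: if $b(s) \geq c + \eta$ on some interval $(t^*, t^* + \delta_0)$, then on $\{\tau_B > t^*\}$ we have $W_{t^*} < c$, and $\{\tau_B \in (t^*, t^* + \delta]\}$ forces $\sup_{s\in[t^*, t^* + \delta]} W_s \geq c + \eta$, so by the strong Markov property at $t^*$ and the reflection principle,
\[\P(\tau_B \in (t^*, t^* + \delta]) \leq 2 g(t^*) \exp\Bigpar{-\frac{\eta^2}{2\delta}}.\]
Together with $\P(\tau_B \in (t^*, t^* + \delta]) = g(t^*) - g(t^* + \delta) \geq C\delta$ from~\eqref{llip2}, this is a contradiction for small $\delta$.

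For the limsup bound, suppose instead that $b(s_n) > c + \eta$ for some $\eta > 0$ and a sequence $s_n \to t^*$, and consider the open set $V := \{s : b(s) > c + \eta/2\}$. Decomposing $V$ near $t^*$ into its maximal open components $(\alpha_i, \beta_i)$, Theorem~\ref{Tgcont}\ref{tcontliminf} at the right endpoints and the right-side liminf bound just established at the left endpoints give $b(\alpha_i) = b(\beta_i) = c + \eta/2$. I would then split into two cases. If some subsequence of components satisfies $\beta_i \geq t^* + \delta_0$ while $\alpha_i \downarrow t^*$, then, since $b > c + \eta/2$ on each such component, taking unions shows $b > c + \eta/2$ on all of $(t^*, t^* + \delta_0)$, contradicting the right-side liminf bound. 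Otherwise all relevant components shrink; summing the reflection estimate over components in $(t^*, t^* + T)$ gives the aggregated bound
\[\sum_{(\alpha_i,\beta_i) \subset (t^*,t^*+T)} (\beta_i - \alpha_i) \leq \frac{1}{C}\sum_i \P(\tau_B \in (\alpha_i, \beta_i]) \leq \frac{2 g(t^*)}{C} \exp\Bigpar{-\frac{\eta^2}{8T}},\]
which combined with a second application of reflection on each individual component (starting from $W_{\alpha_i}$, which lies below $c + \eta/4$ with high probability once $\alpha_i - t^*$ is small, because $W_{t^*} < c$ and $W_{\alpha_i} - W_{t^*}$ is Gaussian of small variance) should force $g(\alpha_i) - g(\beta_i) = o(\beta_i - \alpha_i)$, again contradicting~\eqref{llip2}.

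The hard part will be closing the second case cleanly, because the rates $\alpha_i - t^* \to 0$ and $\beta_i - \alpha_i \to 0$ need not be comparable, so the conditional distribution of $W_{\alpha_i}$ on $\{\tau_B > \alpha_i\}$ has to be controlled uniformly. I expect the resolution to be to combine the aggregated measure estimate above with an auxiliary point $t'_n \in (t^*, \alpha_n)$ satisfying $b(t'_n) \leq c + \eta/8$, which exists by the right-side liminf bound applied at $t^*$, so that the strong Markov property can be invoked at $t'_n$ rather than at $t^*$ to keep the exponential bound small relative to $\beta_n - \alpha_n$.
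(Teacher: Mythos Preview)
Your liminf arguments (the left liminf via Theorem~\ref{Tgcont} and the right liminf via reflection plus~\eqref{llip2}) match the paper exactly. The difficulty is entirely in the limsup bound, and here your component-based approach has a genuine gap that your proposed resolution does not close.

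The problem is this: once you know $b(\alpha_i)=c+\eta/2$, the conditioning $\{\tau_B>\alpha_i\}$ only gives $W_{\alpha_i}<c+\eta/2$, with no quantitative gap. Reflection from $\alpha_i$ therefore yields nothing useful for $\P(\tau_B\in(\alpha_i,\beta_i])$. Your fix is to step back to an auxiliary time $t'_n\in(t^*,\alpha_n)$ with $b(t'_n)\le c+\eta/8$, so that $W_{t'_n}<c+\eta/8$ and a gap of $3\eta/8$ reappears. But the reflection bound from $t'_n$ involves the span $\beta_n-t'_n$, not $\beta_n-\alpha_n$, and you need these to be comparable in order to beat $C(\beta_n-\alpha_n)$. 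However, since $\liminf_{s\upto\alpha_n}b(s)=b(\alpha_n)=c+\eta/2$ by Theorem~\ref{Tgcont}, there is an entire left-neighbourhood of $\alpha_n$ on which $b>c+\eta/8$; any admissible $t'_n$ is therefore forced away from $\alpha_n$ by some $\delta_n>0$, and nothing prevents $\delta_n\gg\beta_n-\alpha_n$. Your aggregated measure estimate does not help either: it controls $\sum(\beta_i-\alpha_i)$, not the individual ratio you need.

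The paper sidesteps this with a different device. Fixing $t_2$ and $d>0$, it replaces $b$ by the auxiliary boundary
\[
b_1(s):=\begin{cases} b(s),& s\le t_2,\\ b(t_2)+d,& s>t_2,\end{cases}
\]
with hitting time $\tau_1$. One checks that $\tau\in E:=\{s\in(t_2,t_2+a):b(s)>b(t_2)+d\}$ forces $\tau_1\in E$, so $\P(\tau_1\in E)\ge\P(\tau\in E)\ge C\lambda(E)$ by~\eqref{llip1}. On the other hand, beyond $t_2$ the boundary $b_1$ is a constant level, and the explicit first-passage density to a level gives $\P(\tau_1\in E)\le (C_1/d^2)e^{-d^2/(3a)}\lambda(E)$. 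Comparing the two bounds forces $d^2e^{d^2/(3a)}\le C_1/C$ whenever $E\neq\emptyset$, which is the one-sided modulus $b(t)-b(t_2)\le C_2\sqrt{(t-t_2)\ln(1/(t-t_2))}$. Combined with the two liminf identities, this single estimate delivers both limsup inequalities. The idea you are missing is a mechanism that makes the hitting probability scale with the \emph{same} time increment as in~\eqref{llip2}; passing to the constant-level barrier $b_1$ is precisely what achieves that.
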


\begin{proof}
Assume that $b$ is a solution of the inverse first-passage problem so that 
$\tau:=\inf\{t>0:W_t\geq b(t)\}$ satisfies $\P(\tau>t)=g(t)$.
By Theorem \ref{Tgcont}, \eqref{liminf} holds for $t\in(t_0,t_1)$.
(The proof requires only that $g(t)$ is continuous for $t\in(t_0,t_1)$.)
We next claim that 
\begin{equation}
\label{right}
\liminf_{s\downto t} b(s)= b(t)
\end{equation}
for $t\in(t_0,t_1)$.
Assume, to reach a contradiction, that \eqref{right} does not hold. Since $b$ is lower semi-continuous, this means that 
$\liminf_{s\downto t} b(s)> b(t)$ for some $t$. Thus there exists $\ep>0$ such that 
$b(t+s)\geq b(t)+\ep$ for all $s\in(0,\ep)$. For such $s$ we have 
\begin{align*}
g(t)-g(t+s)
&=
\P(\tau\in(t,t+s]) \leq \P\Bigpar{W_t<b(t),\sup_{t< r\leq t+s}W_r>b(t)+\ep}\\
&\leq \P\Bigpar{\sup_{t\leq r\leq t+s}(W_r-W_t)>\ep}
=\P\Bigpar{\sup_{0\leq r\leq s}W_r>\ep}.
\end{align*}
Since 
\[
\P\Bigpar{\sup_{0\leq r\leq s}W_r>\ep}= \sqrt{\frac{2}{\pi}}\int^\infty_{\ep/\sqrt{s}} e^{-z^2/2}dz\leq 
\sqrt{\frac{2s}{\pi \ep^2}}e^{-\ep^2/(2s)},\]
we find using \eqref{llip2} that 
\[0<C\leq \frac{g(t)-g(t+s)}{s}\leq \sqrt{\frac{2}{\pi s\ep^2}}e^{-\ep^2/(2s)}\to 0\]
as $s\to 0$. This contradiction shows that \eqref{right} holds.

In view of \eqref{liminf} and \eqref{right}, to prove continuity of $b$ it
suffices to show that  
\begin{equation}
\label{limsup}
\limsup_{s\to t} b(s)\le b(t)
\end{equation}
for any $t\in(t_0,t_1)$.
To do that, let $t_2\in[t_0,t_1)$, $a\in(0,t_1-t_2)$, $d>0$, and let 
\[E:=\{t\in(t_2,t_2+a): b(t)>b(t_2)+d\}.\]
Suppose that $E\not=\emptyset$. Since $b$ is lower semi-continuous, $E$ is open. Thus $\lambda(E)>0$, where
$\lambda$ denotes Lebesgue measure. Furthermore, 
by \eqref{llip1},
\[\P(\tau\in E)\ge \int_E -g'(t)\,dt\geq C\lambda(E).\]
(The first inequality may be strict if $g(dt)$ has a singular component.) 
Replace the boundary $b$ by 
\[b_1(t):=
\begin{cases}
b(t), & t\leq t_2\\
b(t_2)+d, & t>t_2,  
\end{cases}
\]
and let 
\[\tau_1:=\inf\{t>0:W_t\geq b_1(t)\}\]
be the corresponding stopping time.

If $\tau\in E$, then $W(\tau)=b(\tau)>b(t_2)+d$ and $W(t_2)<b(t_2)$. Thus $W(t)$ hits $b(t_2)+d$ somewhere in $(t_2,\tau)$, and since
$W(t)$ has not hit $b(t)$ in $[0,t_2]$, we have $t_2<\tau_1<\tau$. Hence $b(t_2)+d=W(\tau_1)<b(\tau_1)$, which shows that
$\tau_1\in E$. Thus if $\tau\in E$, then also $\tau_1\in E$.
Consequently,
\begin{equation}
\label{lowbound}
\P(\tau_1\in E)\geq \P(\tau\in E)\geq C\lambda(E).
\end{equation}

Next, condition on $\{\tau>t_2\}$ (equivalently, on $\{\tau_1>t_2\}$) and on $\{W(t_2)=y\}$, with $y<b(t_2)$.
Let $h=b(t_2)+d-y>d$. Then $\tau_1-t_2 \,{\buildrel d \over =}\,\inf\{t\geq 0:W(t)=h\}$, which has the density
\[\varphi_h(t)=\frac{h}{\sqrt{2\pi t^3}}e^{-h^2/(2t)}
\leq \frac{C_1}{h^2}e^{-h^2/(3t)}\leq \frac{C_1}{d^2}e^{-d^2/(3t)}.\]
Since the right-hand side does not depend on $y$, 
$\tau_1$ has a density $f_1(t)$ on $(t_2,\infty)$ which satisfies 
\[f_1(t)\leq \frac{C_1}{d^2}e^{-d^2/(3(t-t_2))}
\P(\tau >t_2)\leq \frac{C_1}{d^2}e^{-d^2/(3(t-t_2))}.\]
Since $E\subset (t_2,t_2+a)$, we obtain 
\begin{equation}
\label{upbound}
\P(\tau_1\in E)=\int_E f_1(t)\,dt\leq \frac{C_1}{d^2}e^{-d^2/(3a)}\lambda(E).
\end{equation}
Combining \eqref{lowbound} and \eqref{upbound} yields
\[C\lambda(E)\leq \frac{C_1}{d^2}e^{-d^2/(3a)}\lambda(E),\]
and thus, recalling $\lambda(E)>0$, 
\begin{equation}\label{dC}
d^2e^{d^2/(3a)}\leq C_1/C.
\end{equation}
We assume for simplicity, and without loss of generality, that $t_1-t_0\le
e^{-1}$. Then $a< t_1-t_2<e^{-1}$.
By \eqref{dC},
either $d^2\leq aC_1/C$ or $e^{d^2/(3a)}\leq a^{-1}$,
and in both cases,
\begin{equation}
\label{boundond}
d\leq C_2\sqrt{a\ln (1/a)}.
\end{equation}
Recalling the definition of $E$, we have shown that if $t\in(t_2,t_2+a)$ and $b(t)>b(t_2) + d$, then 
\eqref{boundond} holds. Consequently, for every $t\in(t_2,t_2+a)$, 
\[b(t)-b(t_2)\leq C_2\sqrt{a\ln(1/a)}.\]
Furthermore, for every $t\in(t_2,t_1)$, this holds for every $a\in(t-t_2,t_1-t_2)$, and thus
\[b(t)-b(t_2)\leq C_2\sqrt{(t-t_2)\ln(1/(t-t_2))} .\]
Note that the constant $C_2$ only depends on the lower bound $C$ in
\eqref{llip1}, so 
\begin{equation}
\label{holder}
b(t)-b(s)\leq C_2\sqrt{(t-s)\ln(1/(t-s))}
\end{equation}
for any $t_0<s<t<t_1$.
This immediately implies
\begin{equation}
\label{limsupdown}
\limsup_{u\downto t} b(u)\le  b(t)
\end{equation}
for $t\in(t_0,t_1)$.
Furthermore, if $t_0<s<t<t_1$, apply \eqref{holder} with $t$ replaced by
$u\in(s,t)$ and take the $\limsup$ as $u\upto t$. This yields
\begin{equation}\label{sw}
  \limsup_{u\upto t} b(u)\le b(s)+C_2\sqrt{(t-s)\ln(1/(t-s))}.
\end{equation}
Now take the $\liminf$ of \eqref{sw} as $s\upto t$. This yields, using
\eqref{liminf}, 
\begin{equation}\label{ring}
  \limsup_{u\upto t} b(u)\le \liminf_{s\upto t}b(s)=b(t).
\end{equation}
Combining \eqref{limsupdown} and \eqref{ring} we obtain 
\eqref{limsup},
which completes the proof.
\end{proof}

\begin{remark}
Note that the above proof (with $t_2=t_0=0$) 
also shows that if $-g^\prime\geq C>0$ on $(0,t_1)$, then 
setting $b(0)=0$ makes $b$ continuous on $[0,t_1)$.
\end{remark}

\begin{remark}
We have shown the one-sided estimate \eqref{holder}, which implies a one-sided H\"older($\gamma$) estimate for any $\gamma<1/2$
in any interval where $-g^\prime$ is bounded below. 
We do not know whether a corresponding upper bound on the density would guarantee a similar estimate also of $b(s)-b(t)$. Since 
we only need continuity of the boundary in the arguments in the next
section, we refrain from a more detailed study of regularity. 
\end{remark}

\section{A non-linear integral equation for the boundary}
\label{S:inteqn}

In this section we show that if a given survival distribution function $g$ has a positive density, then the corresponding boundary $b$ 
can be characterised as the unique solution of a certain non-linear integral equation. The techniques are a mixture of techniques
from the first-passage time problem and from optimal stopping problems.

The integral equation associated with the boundary $b$ is (cf. \cite[Section 14]{PS})
\begin{equation}
\label{inteqn}
\Psi\left( \frac{b(t)}{\sqrt{t}}\right)= \int_0^t\Psi\left(\frac{b(t)-b(s)}{\sqrt{t-s}}\right)(1- g)(ds)
\end{equation}
for $t>0$, where $\Psi=1-\Phi$ and 
\[\Phi(y)=\frac{1}{\sqrt{2\pi}}\int^y_{-\infty} e^{-\frac{z^2}{2}}dz\]
is the distribution function of the standard normal distribution.

\begin{remark}
The standard way in optimal stopping theory to derive an integral equation for the boundary is to 
apply It\^o's formula to the process $v(t-s,x+W_{s})$, to argue that the local martingale term arising in 
It\^o's formula is in fact a martingale, and then to insert $x=b(t)$ and take expected values.
To apply this scheme, one needs to assume that $v$ is regular enough to apply It\^o's formula (in particular, 
also over the boundary and up to the terminal date). The applicability of It\^o's formula often relies on 
arguments using the monotonicity of the optimal stopping boundary and the smooth fit condition. 
In the present setting, however, the boundary is typically not monotone, and the smooth fit condition is
not expected to hold at all boundary points. Instead, we derive \eqref{inteqn} below using methods from 
first-passage time problems. As a contrast, the uniqueness statement is proved using methods from
optimal stopping theory.
\end{remark}

\begin{theorem}
\label{contsurv}
Assume that $g$ is a continuous survival distribution function. Then any solution $b$ of the inverse first-passage problem
satisfies \eqref{inteqn}.
\end{theorem}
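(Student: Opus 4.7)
The plan is to condition on the first-passage time $\tau_B$ and exploit the strong Markov property, using the continuity of $g$ via Theorem~\ref{Tgcont}\ref{tcontw} to pin $W_{\tau_B}$ exactly on the boundary. The left-hand side of \eqref{inteqn} rewrites as $\Psi(b(t)/\sqrt t)=\P(W_t\ge b(t))$. Splitting according to whether $\tau_B\le t$ or not,
\[
\P(W_t\ge b(t)) = \P(W_t\ge b(t),\,\tau_B\le t) + \P(W_t\ge b(t),\,\tau_B>t),
\]
and observing that on $\{\tau_B>t\}$ one has $W_s<b(s)$ for all $s\in(0,t]$, in particular $W_t<b(t)$, the second term vanishes.

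For the remaining term, I would apply the strong Markov property at $\tau_B$. Because $g$ is continuous, Theorem~\ref{Tgcont}\ref{tcontw} gives $W_{\tau_B}=b(\tau_B)$ a.s.\ on $\{\tau_B<\infty\}$. Hence on $\{\tau_B\le t\}$ the post-$\tau_B$ increment $W_t-W_{\tau_B}$ is a Brownian increment of length $t-\tau_B$, independent of $\mathcal F_{\tau_B}$, and so
\[
\P(W_t\ge b(t)\mid\mathcal F_{\tau_B})\etta_{\{\tau_B\le t\}} = \Psi\!\left(\frac{b(t)-b(\tau_B)}{\sqrt{t-\tau_B}}\right)\etta_{\{\tau_B\le t\}}.
\]
Since $\tau_B$ has (sub)distribution function $1-g$, integrating this identity and combining with the first paragraph yields
\[
\Psi\!\left(\frac{b(t)}{\sqrt t}\right)=\P(W_t\ge b(t),\,\tau_B\le t) = \int_0^t \Psi\!\left(\frac{b(t)-b(s)}{\sqrt{t-s}}\right)(1-g)(ds),
\]
which is \eqref{inteqn}.

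The crux of the argument is the identity $W_{\tau_B}=b(\tau_B)$; without it one would have to integrate over the unknown distribution of $W_{\tau_B}$ rather than obtain the closed-form integrand $\Psi((b(t)-b(s))/\sqrt{t-s})$, and it is the only point where continuity of $g$ is used. A minor bookkeeping matter is that $b$ may take the value $+\infty$ on parts of $(0,t)$: on such a set the measure $(1-g)(ds)$ places no mass (the first passage cannot occur where the boundary is infinite) and the integrand equals $\Psi(+\infty)=0$, so the formula is unambiguous and the proof goes through unchanged.
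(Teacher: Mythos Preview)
Your proof is correct and follows essentially the same route as the paper's: both start from $\Psi(b(t)/\sqrt t)=\P(W_t\ge b(t))=\P(W_t\ge b(t),\tau_B\le t)$, invoke Theorem~\ref{Tgcont} to obtain $W_{\tau_B}=b(\tau_B)$, and then use the strong Markov property to compute the conditional probability and integrate against the law of $\tau_B$. Your version is in fact more explicit than the paper's (which defers to \cite[Theorem~14.2]{PS} for the calculation), and your remark about the case $b=+\infty$ is a nice clarification that the paper omits.
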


\begin{proof}
By Theorem \ref{Tgcont},
$W_{\tau_B}=b(\tau_B)$. Therefore, similar calculations as in the proof of 
\cite[Theorem 14.2]{PS} yield
\begin{eqnarray}
\label{palme}
\Psi\left(\frac{b(t)}{\sqrt{t}}\right)&=& \P(W_t\geq b(t))\\
\notag
&=& \P(W_t\geq b(t),\tau_B\leq t)\\
\notag
&=&\int_0^t\P(W_t\geq b(t)\mid\tau_B=s)(1-g)(ds)\\
\notag
&=&  \int_0^t\Psi\left(\frac{b(t)-b(s)}{\sqrt{t-s}}\right)(1-g)(ds)
\end{eqnarray}
for $t>0$, which finishes the proof.
\end{proof}

\begin{theorem}
\label{uniqueness}
Let $0<T\le\infty$.
If the survival distribution function $g$ is continuous and satisfies \eqref{llip1}, or
equivalently \eqref{llip2},
with some constant $C(t_0,t_1)>0$ on  
$(t_0,t_1)$ for every $t_0,t_1$ with $0<t_1<t_2<T$, 
then $b$ is the unique continuous solution of \eqref{inteqn} on $(0,T)$
with 
\begin{equation}\label{tu}
\liminf_{t\downto 0}b(t)\geq 0.
\end{equation}
\end{theorem}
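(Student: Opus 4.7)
The plan is to adapt the Peskir-type uniqueness technique from optimal stopping theory, combined with the uniqueness result for the inverse first-passage problem (Theorem \ref{main2}). Let $c \colon (0, T) \to \R$ be any continuous solution of \eqref{inteqn} with $\liminf_{t \downto 0} c(t) \geq 0$; the aim is to show $c = b$. First I would attach to $c$ its own first-passage data by setting $\tau_c := \inf\{s > 0 : W_s \geq c(s)\}$ and $g_c(t) := \P(\tau_c > t)$. The condition $\liminf_{t \downto 0} c(t) \geq 0$ together with the $0$--$1$ law forces $\tau_c > 0$ a.s., so $g_c$ is a survival distribution function; continuity of $c$ gives $W_{\tau_c} = c(\tau_c)$ a.s.\ on $\{\tau_c < \infty\}$ (Theorem \ref{Tgcont}), whence $\P(\tau_c = t) \leq \P(W_t = c(t)) = 0$ for every $t > 0$, and $g_c$ is continuous on $[0, T)$.

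Next I would derive a companion integral equation for $c$. The derivation \eqref{palme} uses only continuity of the boundary, continuity of the associated survival function, and the identity $W_\tau = \text{boundary}(\tau)$; applied to $c$ together with $g_c$ it yields
\begin{equation*}
\Psi\bigpar{c(t)/\sqrt{t}} = \int_0^t \Psi\bigpar{(c(t) - c(s))/\sqrt{t - s}}(1 - g_c)(ds), \qquad t \in (0, T).
\end{equation*}
Subtracting this from the assumed integral equation \eqref{inteqn} for $c$ with $g$ gives the key identity
\begin{equation*}
\int_0^t \Psi\bigpar{(c(t) - c(s))/\sqrt{t - s}}(g - g_c)(ds) = 0, \qquad t \in (0, T).
\end{equation*}

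The central step, and where the optimal-stopping viewpoint enters, is to deduce $g = g_c$ from this identity. To that end, I would introduce the candidate value function
\begin{equation*}
V^c(t, x) := \E\bigl[g(t - \gamma^c_{t,x})\etta_{\{\gamma^c_{t,x} < t\}} + \etta_{\{\gamma^c_{t,x} = t,\, x + W_t \geq 0\}}\bigr],
\end{equation*}
where $\gamma^c_{t,x} := \inf\{s \geq 0 : x + W_s \geq \bar c^t(s)\}$ with $\bar c^t$ defined in analogy with \eqref{bbt}. Theorem \ref{Tstopp} applied to $b$ gives $V^b = v = u$, and optimality of $v$ in \eqref{v} gives $V^c \geq v$ always. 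A strong Markov argument at $\gamma^c_{t,x}$, together with $W_{\gamma^c_{t,x}} = c(t - \gamma^c_{t,x})$ on $\{\gamma^c_{t,x} < t\}$ (which holds by continuity of $c$), shows that the integral equation for $c$ with $g$ forces $V^c$ to match value $g(t)$ along $x = c(t)$ and to satisfy the same backward heat-propagation relation as $u$; combined with $V^c \geq v = u$, this ultimately pins down $V^c = u$ and hence $c = b$. Hypothesis \eqref{llip1} is decisive here, since strict monotonicity of $g$ excludes degenerate configurations in which $V^c$ could agree with $u$ without $c$ agreeing with $b$. Finally, once $g_c = g$, the standard barriers attached to $b$ and $c$ both solve the inverse first-passage problem for $g$, and Theorem \ref{main2} yields $c = b$.

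The main obstacle is the optimal-stopping comparison just described. The difficulty is that the inequality $V^c \leq g(t)$ is not obvious a priori, since the payoff $g(t - \gamma^c_{t,x})$ may exceed $g(t)$; hence the one-sided bound $V^c \geq v$ is not enough, and one must extract an upper bound for $V^c$ from the integral-equation constraint on $c$. A careful strong Markov argument, combined with continuity of $c$ and the lower bound \eqref{llip1} on $-g'$ (which keeps the continuation region and the stopping region strictly separated by $b$), is needed to complete the identification $V^c = u$.
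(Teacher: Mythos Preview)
Your proposal has the right flavour but a genuine gap at the decisive step. Everything you assert about your $V^c$ is correct: $V^c\ge v$ because $\gamma^c_{t,x}$ is one competitor in \eqref{v}; $V^c(t,c(t))=g(t)$ because $\gamma^c_{t,c(t)}=0$; and $V^c$ solves the backward heat equation below the graph of $c$ by the strong Markov property. But none of these facts uses the integral equation \eqref{inteqn}: they hold for \emph{any} continuous $c$ with $\liminf_{t\downto0}c(t)\ge0$. So from them alone you cannot conclude $V^c=u$ or $c=b$; the constraint \eqref{inteqn} must enter somewhere, and you have not said where or how. The detour through $g_c$ and the subtracted Volterra identity $\int_0^t\Psi\bigl((c(t)-c(s))/\sqrt{t-s}\bigr)(g-g_c)(ds)=0$ does not help either, since you give no argument for extracting $g=g_c$ from it, and uniqueness for Volterra equations with signed Stieltjes measures is not automatic.

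The paper closes the gap by defining the comparison function through the integral formula
\[
V(t,x):=\P(x+W_t\ge0)+\int_0^t\P\bigl(x+W_{t-u}\ge c(u)\bigr)\,g(du),
\]
which buys two things your stopping-time definition does not. First, the integral equation \eqref{inteqn} translates \emph{directly} into $V(t,c(t))=g(t)$; this is where the hypothesis on $c$ is spent. Second, the Markov property makes $M_s:=V(t-s,x+W_s)+\int_{t-s}^{t}\etta_{\{x+W_{t-u}\ge c(u)\}}\,g(du)$ a martingale for \emph{all} $s\in[0,t]$, not merely up to the first hit of $c$. This global martingale structure is what drives the hard inequality $c\le b$: one starts at $x=c(t)>b(t)$, runs until hitting $b$ from above, and compares $M$ with its analogue $m$ built from $v$; the two integrals differ exactly on $\{b(u)\le x+W_{t-u}<c(u)\}$, and the strict negativity of $g(du)$ from \eqref{llip1} yields a contradiction. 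Your $V^c$, defined via $\gamma^c$, carries no martingale information once the process sits above $c$, so this comparison does not run. (It is true that $V=V^c$ everywhere, but proving equality above $c$ is precisely the step that consumes the integral equation.)
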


\begin{remark}
The proof of Theorem~\ref{uniqueness} follows the proof of Theorem 4.3 in \cite{J} and Remark 3.2 in \cite{P}.
\end{remark}

\begin{proof}
By Theorem~\ref{contsurv}, $b$ solves \eqref{inteqn}; furthermore, $b$ is continuous on $(0,T)$
by Theorem~\ref{Tbcont} and \eqref{tu} holds since otherwise
$\tau_B=0$ a.s.\ and $g(t)=0$, contradicting our assumptions.

For uniqueness, assume that $c:(0,T)\to\R$ is continuous, that 
\begin{equation}
\label{inteqnc}
\Psi\left( \frac{c(t)}{\sqrt{t}}\right)= \int_0^t\Psi\left(\frac{c(t)-c(s)}{\sqrt{t-s}}\right)(1- g)(ds)
\end{equation}
for $t\in(0,T)$ and that
\begin{equation}
\label{liminf0}
\liminf_{t\downto 0}c(t)\geq 0.
\end{equation}
For $(t,x)\in[0,T)\times\R$, define 
\begin{equation}
\label{V}
V(t,x):= \P(x+W_t\geq 0)+\int_0^t\P\left(x+W_{t-u}\geq c(u)\right)g(du)
\end{equation}
and note that $V(0,x)=\etta_{[0,\infty)}(x)$ and, as a consequence of
  \eqref{inteqnc}, for $t\in(0,T)$,  
  \begin{equation}\label{Vc=g}
	\begin{split}
V(t,c(t))&
= 1-\Psi\left( \frac{c(t)}{\sqrt{t}}\right)
+ \int_0^t\left(1-\Psi\left(\frac{c(t)-c(s)}{\sqrt{t-s}}\right)\right)g(ds)
\\&= 1
+ \int_0^t g(ds)
=
g(t). 	  
	\end{split}
  \end{equation}
Moreover, it follows from the Markov property that the process 
\begin{equation}
\label{Ms}
M_s:=V(t-s,x+W_s)+\int_{t-s}^t I(x+W_{t-u}\geq  c(u)) g(du)
\end{equation}
is a martingale. 
Note that $M_0=V(t,x)$.

Next, note that a similar calculation as in \eqref{palme} yields
\begin{eqnarray*}
u(t,x) &=& \P(W_t\leq x,\tau>t)\\
&=& \P(W_t\leq x) -\P(W_t\leq x,\tau\leq t)\\
&=& \Phi\left(\frac{x}{\sqrt t}\right)-\int_0^t\Phi\left(\frac{x-b(s)}{\sqrt{t-s}}\right)(1-g)(ds)
\end{eqnarray*}
for $t\in(0,T)$, so by Theorem~\ref{main3} we have, cf.\ \eqref{V},
\begin{eqnarray}
\label{vrepr}
v(t,x) &=& \Phi\left(\frac{x}{\sqrt t}\right)-\int_0^t\Phi\left(\frac{x-b(s)}{\sqrt{t-s}}\right)(1-g)(ds)\\
\notag
&=&
\P(x+W_t\geq 0)+\int_0^t\P\left(x+W_{t-u}\geq b(u)\right) g(du)
\end{eqnarray}
for $t\in(0,T)$.
Therefore, by the Markov property, also the process
\begin{equation}
  \label{ms}
m_s:=v(t-s,x+W_s)+\int_{t-s}^t I(x+W_{t-u}\geq  b(u)) g(du)
\end{equation}
is a martingale. Note that $m_0=v(t,x)$.

\smallskip\noindent
\emph{Claim 1:} $V(t,x)=g(t)$ for $(t,x)\in(0,T)\times\R$ with $x\geq c(t)$.

For $x=c(t)$, this is shown in \eqref{Vc=g}. 
Hence, assume $x> c(t)$, and define the stopping time
\begin{equation}
  \label{gammac}
\gamma_{c}:=\inf\{s\in[0,t]:x+W_s\leq  c(t-s)\}\wedge t.
\end{equation}
If $\gamma_c<t$, then $x+W_{\gamma_c}=c(t-\gamma_c)$ by \eqref{gammac} and
continuity, and thus 
\begin{equation}
  \label{vtc}
V(t-\gamma_c,x+W_{\gamma_c})=g(t-\gamma_c)
\end{equation}
by \eqref{Vc=g}. If $\gamma_c=t$, then $x+W_s>c(t-s)$ for $0<s\le t$ by
\eqref{gammac}, and letting $s\upto t$ yields, using \eqref{liminf0}, 
$x+W_t\ge 0$; hence $V(0,x+W_{t})=1=g(0)$ and \eqref{vtc} holds in
this case too.
Furthermore (in both cases), $x+W_s>c(t-s)$ for $s<\gamma_c$, 
and thus \eqref{Ms} yields, using \eqref{vtc}, 
\begin{eqnarray*}
M_{\gamma_{c}} &=& V(t-\gamma_{c},x+W_{\gamma_{ c}}) + \int_{t-\gamma_c}^t g(du)\\
&=& g(t-\gamma_{c}) + g(t)-  g(t-\gamma_{ c})=  g(t).
\end{eqnarray*}
Consequently, $V(t,x)=M_0=\E[M_{\gamma_{ c}}]=g(t)$.

\smallskip\noindent
\emph{Claim 2:} $V(t,x)\geq v(t,x)$ for $(t,x)\in[0,T)\times\R$.

For $t=0$ we have equality, and for $(t,x)\in(0,T)\times \R$ with $x\geq c(t)$, the claim follows from Claim 1 and the fact that $v(t,x)\leq g(t)$.
For $(t,x)\in(0,T)\times \R$ with $x< c(t)$, define the stopping time
\[\tau_{c}:=\inf \{s\in[0,t]:x+W_s\geq  c(t-s)\}\wedge t.\]
For $s\leq\tau_c$ and $t-s<u<t$, we have $t-u<s\le\tau_c$ and thus
$x+W_{t-u}<c(u)$. Hence, \eqref{Ms} yields 
$M_s=V(t-s,x+W_s)$ for $s\leq \tau_c$ and thus, using \eqref{Vc=g} when
$\tau_c<t$ and thus $x+W_{\tau_c}=c(t-\tau_c)$,
\begin{align*}
V(t,x) &= 
M_0=\E M_{\tau_c}
=
\E\left[V(t-\tau_c,x+W_{\tau_c})\right]\\
&=  \E\left[g(t-\tau_c)\etta_{\{\tau_c<t\}} + \etta_{\{x+W_t\geq 0,\tau_c=t\}}  \right]\\
&\geq  v(t,x)
\end{align*}
by the definition \eqref{v} of $v$, which finishes the proof of Claim 2.

\smallskip\noindent
\emph{Claim 3:} $c(t)\leq b(t)$ for $t\in(0,T)$.

Assume, to reach a contradiction, that $c(t)>b(t)$ for some $t\in(0,T)$.
By continuity of $b$ and $c$, we may find $\ep >0$ such that 
$c(s)>b(s)+\ep$ for all $s\in [t-\ep,t]$.
Choose $x=c(t)$, and define the stopping time
\[\gamma_{b}:=\inf\{s\in[0,t]:x+W_s\leq b(t-s)\}\wedge \ep.\]
Then $V(t,x)=g(t)$ by \eqref{Vc=g} and $v(t,x)=u(t,x)=g(t)$ since $x=c(t)\ge
b(t)$. Hence, using \eqref{Ms}, \eqref{ms} and $b(u)<c(u)$ for
$u\in[t-\gamma_b,t]$, 
\begin{align*}
0 &= V(t,x)-v(t,x)
=M_0-m_0
= \E[M_{\gamma_b}-m_{\gamma_b}]\\
&= 
\E\left[V(t-\gamma_{b},x+W_{\gamma_b})-v(t-\gamma_{b},x+W_{\gamma_{b}})\right]
\\&
\qquad-\E \left[\int_{t-\gamma_b}^t I(b(u)\leq x+W_{t-u}<c(u))g(du)\right]
\\&
\ge-\int_{t-\eps}^t \P\bigpar{b(u)\leq x+W_{t-u}<c(u),\, \gamma_b\ge t-u}g(du)
>0
\end{align*}
since $V\geq v$ by Claim 2 and $g(du)$ is a negative measure that does not
vanish on $(t-\eps,t)$.
This contradiction proves the claim.

\smallskip\noindent
\emph{Claim 4:} $V(t,x)= v(t,x)$ for $(t,x)\in[0,T)\times\R$.

It follows from Claim 3 that 
$\P(x+W_{t-u}\ge c(u))\ge \P(x+W_{t-u}\ge b(u))$ for every $u\le t$, and thus
\eqref{V} and \eqref{vrepr} yields $V(x,t)\leq v(x,t)$, 
again recalling that $g(du)$ is a negative measure.
Hence $V=v$ by Claim 2.

\smallskip\noindent
\emph{Claim 5:} $c(t)\geq b(t)$ for $t\in(0,T)$.

Let $x=c(t)$. Then, by Claims 4 and 1, $v(t,x)=V(t,x)=g(t)$. Thus, by
\eqref{Bfromv} and Theorem \ref{main2}, $(t,x)\in B$ so $c(t)=x\ge b(t)$.

\smallskip
By Claims 3 and 5, $c(t)=b(t)$ for $t\in(0,T)$.
\end{proof}

\end{document}